\documentclass[11pt]{amsart}
\usepackage{amssymb,amscd}
\usepackage{graphicx}
\DeclareGraphicsExtensions{.jpg, .eps}
\DeclareGraphicsRule{.jpg}{eps}{.jpg.bb}{`jpeg2ps -h -r 600 #1}
\usepackage{textcomp}
\oddsidemargin 0in \evensidemargin 0in \textwidth 6.0in
\textheight 8in
\newcommand{\li}{{\rm lim}\sp{1}}

\newcommand{\N}{{\mathbb N}}
\newcommand{\T}{{\mathcal T}}
\newcommand{\PP}{{\mathcal P}}
\newcommand{\Z}{{\mathbb Z}}
\newcommand{\Q}{{\mathbb Q}}
\newcommand{\R}{{\mathbb R}}

\newcommand{\XX}{{\mathbb X}}
\newcommand{\YY}{{\mathbb Y}}
\newcommand{\A}{{\mathcal A}}
\newcommand{\CC}{{\mathcal C}}

\newcommand{\Om}{{\Omega}}

\numberwithin{equation}{section}

 \newcommand{\TT}{\mathbb{T}}

 \newcommand{\lra}{\longrightarrow}
 \newcommand{\im}{\mbox{Im\,}}

\newtheorem{theorem}{Theorem}[section]
\newtheorem{lemma}[theorem]{Lemma}

\theoremstyle{definition}
\newtheorem{definition}[theorem]{Definition}
\newtheorem{prop}[theorem]{Proposition}
\newtheorem{cor}[theorem]{Corollary}

\newtheorem{example}[theorem]{Example}

\theoremstyle{remark}
\newtheorem{remark}[theorem]{Remark}
\title
{Tiling spaces, codimension one attractors and shape} \keywords{} \subjclass{}
\address{Department of Mathematics, University of Leicester, University Road, Leicester, UK}

\author {Alex Clark}\author{John Hunton}

\thanks{The University of Leicester funded study leave for both authors during the course of this research.}

\begin{document}

\begin{abstract}
We establish a close relationship between, on the one hand, expanding, codimension one attractors of diffeomorphisms on closed manifolds (examples of so-called {\em strange attractors}), and, on the other, spaces which arise in the study of {\em aperiodic tilings}. We show that every such orientable attractor is homeomorphic to a tiling space of either a substitution or a projection tiling, depending on its dimension. We also demonstrate that such an attractor is shape equivalence to a  $(d+1)$-dimensional torus with a finite number of points removed, or, in the non-orientable case, to a space with a 2 to 1 covering by such a torus-less-points. This puts considerable constraints on the topology of codimension one attractors, and constraints on which manifolds tiling spaces may be embedded in. In the process we develop a new invariant for aperiodic tilings, which, for one dimensional tilings is in many cases finer than the cohomological or $K$-theoretic invariants studied to date.
\end{abstract}

\maketitle

\section{Introduction}

This work establishes a close relationship between, on the one hand, expanding, codimension one attractors of diffeomorphisms on closed manifolds (examples of so-called {\em strange attractors}), and, on the other, spaces which arise in the study of {\em aperiodic tilings}.

Following the important programme initiated by Smale~\cite{Sm,R}, hyperbolic attractors of smooth diffeomorphisms have played a key role in understanding the structurally stable diffeomorphisms of closed, smooth manifolds. A $C^k$--diffeomorphism $h\colon M \to M$ of  a $C^k$--manifold ($k\geqslant1$) $M$ is \emph{structurally stable\/} if all diffeomorphisms sufficiently close to $h$ in the $C^k$--metric are topologically conjugate to $h.$  An attractor $A\subset M$ of $h$ is \emph{hyperbolic\/} if  the tangent bundle of the attractor admits an $h$--invariant continuous splitting $\mathbf{E}^s + \mathbf{E}^u$  into uniformly contracting $\mathbf{E}^s$ and expanding $\mathbf{E}^u$ directions. An important class of hyperbolic attractors are the \emph{expanding} attractors, those with the same topological dimension, say $d$,  as the fibre of  $\mathbf{E}^u.$ Expanding attractors locally have the structure of the product of a $d$--dimensional disk and a Cantor set~\cite{W} and are therefore sometimes referred to as $\emph{strange}$ attractors. Locally, the diffeomorphism $h$ expands the disks and contracts in the Cantor set direction. Here we shall focus on codimension one expanding attractors, {\em i.e.}, the case that $A$ is compact and connected (a \emph{continuum}) with topological dimension $d$ one less than the dimension $d+1$ of the ambient manifold $M.$

The tilings we have in mind are patterns in Euclidean space that admit no non-trivial translational symmetries, but nevertheless have the property that arbitrarily large compact patches of the pattern repeat themselves throughout the space. The Penrose tiling is perhaps the best known example of such a pattern, but the class is huge and rich, indeed infinite, and contains, for example, the geometric patterns used to model physical quasicrystals \cite{SD}. A standard tool in the study of any such pattern $P$ is the construction of an associated {\em tiling space\/} $\Omega_P$, a topological space whose points correspond to the set of all patterns locally indistinguishable from $P$. Topological properties of $\Omega_P$, in particular the \v{C}ech cohomology groups $H^*(\Omega_P)$ and various forms of $K$-theory, have long been known to contain  key geometric information about the original pattern $P$,  see, for example, \cite{BBJS, BHZ,CS}.

Our main aim is to describe the possible spaces $A$, both up to homeomorphism and up to shape equivalence, that can arise as a codimension one expanding attractor of a diffeomorphism.  In brief, we show that every such attractor is homeomorphic to a tiling space $\Omega_P$ for some $P$, but that the converse fails (in some sense, it fails in almost all cases). The shape equivalence description gives essentially a complete description of the possible cohomology rings of any such attractor $A$.

Our approach uses tools drawn from both shape theory and homological algebra and in doing so introduces a new invariant that gives an obstruction to the existence of a codimension one embedding of a tiling space in a manifold. Moreover, this invariant is finer than \v{C}ech cohomology and we give examples of  tilings with identical cohomology that it distinguishes.

In drawing on a diverse range of mathematical topics, it is perhaps not reasonable to assume the reader has specialist knowledge of expanding attractors, tiling spaces, shape theory or homological algebra; we introduce the necessary concepts or results directly, where possible. The ideas relating to shape theory and the homological algebra we use are presented in Section 2, while the details we assume of expanding attractors and tiling spaces are discussed in Section 3. The interested reader will find further background information on these topics in \cite{Btext, R, Sadbook}.

We detail our main results below; these are proved in Sections 4 and 5.

\subsection{Our main results}

Our initial results concern models for a codimension 1 expanding attractor up to {\em shape equivalence}. Shape equivalence here means equivalence in the {\em shape category}. We explain more about this notion in the next section, but for now we note that the shape category is a natural one to consider when analysing spaces which readily occur as inverse limits of topological spaces (such as both attractors and tiling spaces), but that shape equivalence is distinct from relations such as homeomorphism or homotopy equivalence. Nevertheless, two spaces that are shape equivalent necessarily share all the same {\em shape invariants}, which include \v{C}ech cohomology and certain forms of $K$-theory. Our identification of the shape of a codimension 1 attractor thus allows both ready computation of the \v{C}ech cohomology, {\em etc}., and  also puts considerable constraints on the possible cohomology rings that can arise.

Our first result shows that a codimension 1 expanding attractor is shape equivalent to a finite polyhedron of a very specific kind.

\begin{theorem}\label{thm1}
Let $A$ be a codimension 1 expanding attractor of the diffeomorphism $h\colon M\to M$. If $A$ is orientable, then it is shape equivalent to a $(d+1)$-dimensional torus with a finite number of points removed, $\TT^{d+1}\!\!-\{k\}$ say. If $A$ is unorientable, it is shape equivalent to a polyhedron that has a 2 to 1 cover by some $\TT^{d+1}\!\!-\{k\}$.
\end{theorem}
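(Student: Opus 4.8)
The plan is to realise $A$ as an inverse limit and then to compute its shape as the pro-homotopy type of the resulting tower. By the theory of expanding attractors recalled in Section 3 (Williams' work), $A$ is homeomorphic to an inverse limit $\varprojlim(K\xleftarrow{f}K\xleftarrow{f}\cdots)$ of a branched $d$-manifold $K$ under an expanding immersion $f$, with shift map conjugate to $h|_A$; equivalently, choosing a trapping neighbourhood $N\subset M$ with $h(N)\subset\operatorname{int}N$ and $A=\bigcap_n h^n(N)$, one has $A=\varprojlim(N\leftarrow N\leftarrow\cdots)$ with bonding maps the inclusions $h^{n+1}(N)\hookrightarrow h^n(N)$. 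Since $K$ (respectively $N$) is an ANR, such an inverse sequence is an expansion of $A$ in the sense of shape theory, so $\operatorname{Sh}(A)$ is exactly the isomorphism class of this tower in the pro-homotopy category. The whole problem is therefore to show that this tower is pro-homotopy equivalent to the constant tower on $\TT^{d+1}-\{k\}$.

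The first step is to collapse the tower to a single space. Because $A$ is hyperbolic one may choose the trapping region $N$ so that $N\setminus\operatorname{int}h(N)$ is a collar $\partial N\times[0,1]$; then $h$ carries $N$ diffeomorphically onto $h(N)$ and each bonding inclusion $h^{n+1}(N)\hookrightarrow h^n(N)$ is a homotopy equivalence. Hence the tower is isomorphic in the pro-homotopy category to the constant tower on $N$, so $\operatorname{Sh}(A)=\operatorname{Sh}(N)$, and in particular $H^\ast(A)=H^\ast(N)$ is finitely generated -- already a strong restriction separating codimension one attractors from generic tiling spaces. Since $N$ is an ANR, shape equivalence now coincides with homotopy equivalence, and it remains to identify the homotopy type of the single compact $(d+1)$-manifold-with-boundary $N$.

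The crux is to prove $N\simeq\TT^{d+1}-\{k\}$, and here the orientability and codimension one hypotheses enter decisively. I would first compute $H^\ast(N)$ via Poincar\'e--Lefschetz duality in $M$ together with the Mayer--Vietoris sequence of the decomposition of $M$ into $N$ and neighbourhoods of the finitely many components of the complement $M\setminus A$; orientability of the unstable bundle $\mathbf{E}^u$ supplies a coherent coorientation of $A$, which forces $H^1(N)$ to have rank $d+1$ and pins down the duality pairing. This rank $d+1$ then yields a classifying map $N\to\TT^{d+1}$, and the main task is to show it is a homotopy equivalence onto a torus with $k$ points deleted, $k$ being controlled by the complementary regions. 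The real obstacle is not the Betti numbers but the ring structure: one must show the cup-product structure of $H^\ast(N)$ is exactly that of a punctured torus, not merely that of a space with the same ranks, and this is where the expanding hyperbolic dynamics -- which forces $h^\ast$ to act on $H^\ast(A)$ as a unimodular automorphism preserving the ring -- must be exploited. I expect this identification, turning the abstract attractor neighbourhood into an honest punctured $(d+1)$-torus, to be the principal difficulty of the argument.

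Finally, the non-orientable case reduces to the orientable one. The double cover $\widetilde A\to A$ that trivialises the orientation of $\mathbf{E}^u$ is itself an orientable codimension one expanding attractor, to which the above applies, giving $\widetilde A\simeq_{\mathrm{sh}}\TT^{d+1}-\{k\}$; the associated free involution then descends to exhibit $A$ as shape equivalent to a polyhedron admitting a $2$ to $1$ cover by $\TT^{d+1}-\{k\}$, as claimed.
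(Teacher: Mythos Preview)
Your overall architecture---show $A$ is stable by exhibiting it as an inverse limit with homotopy-equivalence bonding maps, then identify the stable shape with a punctured torus---matches the paper's, but the crucial identification step has a real gap. You propose to compute $H^*(N)$ by Poincar\'e--Lefschetz duality and Mayer--Vietoris \emph{in $M$}, and then to promote this to a homotopy equivalence $N\simeq\TT^{d+1}-\{k\}$ via a classifying map. The problem is that $M$ is an arbitrary closed $(d+1)$-manifold: nothing in your argument explains why $H^1(N)$ should have rank exactly $d+1$, let alone why $\pi_1(N)$ should be free abelian of that rank. The ring structure, which you rightly flag as the obstacle, is not recoverable from the action of $h^*$ alone. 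What is actually needed here is Plykin's structure theorem (Theorems~\ref{Por} and~\ref{Punor}): for $d\geqslant 2$ the basin $W^s(A)$ compactifies to $\TT^{d+1}$ with $k$ added points, and $\overline h$ is conjugate to a $DA$-diffeomorphism. This is a deep dynamical result, not something deducible from duality in $M$, and the paper uses it as the principal input; once one has it, the neighbourhoods $W_n$ are visibly punctured tori and the collar claim becomes obvious.

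A secondary issue: your assertion that one may choose $N$ in $M$ with $N\setminus\operatorname{int}h(N)\cong\partial N\times[0,1]$ is not justified for general $M$; the paper obtains this only \emph{after} passing to the compactified basin, where the complementary pieces are balls around repelling fixed points of a $DA$-map. Finally, the case $d=1$ is handled by an entirely separate argument in the paper (movability of subcontinua of surfaces, via Krasinkiewicz, together with Williams' presentation as $\varprojlim(\bigvee^r S^1,s)$), not by the Plykin machinery, and your sketch does not address it.
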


Fundamental to our work is Williams' foundational paper \cite{W} which shows that any continuum $A$ that occurs as an expanding attractor  (independent of
codimension) is homeomorphic to the inverse limit $\Lambda$ of a sequence

$$\cdots\to K\buildrel f\over\lra K\buildrel f\over\lra K$$
formed from a single map of a branched manifold $f\colon K \rightarrow K$
satisfying certain expanding properties, and the restriction of $h$
to $A$ is conjugate to the shift map of $\Lambda.$

However, our analysis of attractors splits into two cases, which display significantly different behaviours. On the one hand, in the case where $d=1$, and so $M$ is a closed surface, Williams' branched manifold can be taken as a one point union of copies of the circle, and so the shape theoretic analysis leads us to the study of endomorphisms of free groups, being the homotopy groups of these spaces. The higher dimensional cases, $d\geqslant 2$, involve far more complicated branched manifolds $K$ and a different approach is needed. Here work of Plykin \cite{P1,P2} comes to our aid.

Our second set of results, which follows from these analyses, establishes the connection between the codimension 1 oriented attractors and tiling spaces: again the cases $d=1$ and $d\geqslant 2$ must be treated separately. In the case $d=1$, each such oriented attractor is homeomorphic to a tiling space associated to some so-called {\em primitive substitution\/} tiling. This is well known to the experts and is mentioned in~\cite{BD1}, but we sketch the argument in Section \ref{subtil} for completeness. This argument however stands little chance of generalising to higher dimensions, and our main result for $d\geqslant2$ realises all such attractors up to homeomorphism as tiling spaces associated to a largely distinct class of tilings, the so-called {\em projection tilings}. We note equally, however, that this second approach does not satisfy the case $d=1$: we show that there are certainly 1 dimensional attractors which are substitution tiling spaces, but not projection tiling spaces of the sort needed for the higher dimensions.

\begin{theorem}\label{thm2}
Every oriented codimension 1 attractor $A$ in the $(d+1)$-dimensional manifold $M$ is homeomorphic to the tiling space $\Omega_P$ of an aperiodic tiling $P$ of $\R^d$. In the case $d=1$ we may choose $P$ to be given by a primitive substitution; for $d\geqslant2$, we can describe $P$ as a projection tiling.
\end{theorem}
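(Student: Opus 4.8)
The plan is to leverage Williams' inverse limit description together with Theorem~\ref{thm1}, treating the two cases $d=1$ and $d\geqslant 2$ by quite different routes. In both cases the starting point is that $A$ is homeomorphic to the inverse limit $\Lambda$ of a single expanding map $f\colon K\to K$ of a branched manifold, with the attractor dynamics conjugate to the shift. The goal is to reinterpret this inverse limit as the tiling space of a suitable aperiodic pattern of $\R^d$.

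For the case $d=1$, I would first invoke the structure of the branched manifold: when $M$ is a surface, $K$ can be taken to be a wedge of circles, so $f$ induces an endomorphism of the free group $\pi_1(K)$, equivalently a substitution on a finite alphabet of edges. The plan is to read off from the edge-map of $f$ a symbolic substitution $\sigma$ on the letters corresponding to oriented edges, check that the expanding/flattening (Williams) axioms translate into \emph{primitivity} of $\sigma$ (some power of the incidence matrix is strictly positive) and into the tiling-space matching conditions, and then quote the standard identification of the inverse limit of $f$ with the tiling space $\Omega_P$ of the substitution tiling $P$ determined by $\sigma$. Orientability of $A$ is what lets us choose consistent orientations on the edges so that $f$ is genuinely a substitution (a non-negative, rather than merely signed, letter map); this is the point where the orientability hypothesis is used. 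Since the excerpt notes this case is well known and only to be sketched, here the work is mainly bookkeeping: confirming primitivity and that the inverse limit carries the flow/shift matching the tiling translation action.

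For $d\geqslant 2$, the substitution route is unavailable and I would instead use Theorem~\ref{thm1} as the engine. That theorem tells us $A$ is shape equivalent to $\TT^{d+1}-\{k\}$ (in the oriented case), and I would exploit Plykin's work, cited above, to get a concrete handle on the branched manifold $K$ and the embedding geometry. The strategy is to realise $A$ as a projection (cut-and-project) tiling space by producing the data of such a tiling directly: a lattice $\Z^{d+1}$ in $\R^{d+1}$ with a splitting into a $d$-dimensional ``physical'' space $E$ and a complementary ``internal'' space, together with an acceptance window, so that the resulting canonical projection tiling $P$ of $\R^d$ has $\Omega_P\cong A$. Concretely, I expect the $(d+1)$-torus appearing in Theorem~\ref{thm1} to supply the ambient $\TT^{d+1}=\R^{d+1}/\Z^{d+1}$, with the removed points $\{k\}$ encoding the window's singularities, and the codimension-one expanding foliation of the attractor furnishing the physical subspace $E$; the inverse limit structure then matches the standard description of a projection tiling space as an inverse limit of tori-with-identifications.

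The main obstacle, I expect, is the $d\geqslant 2$ construction: passing from a \emph{shape} equivalence with $\TT^{d+1}-\{k\}$ (which only controls \v{C}ech cohomology and other shape invariants) to an actual \emph{homeomorphism} with a projection tiling space is a genuine strengthening, since shape equivalence is far weaker than homeomorphism. The crux will be to show that the branched-manifold/inverse-limit model provided by Williams and Plykin is not merely shape equivalent but literally isomorphic, as an inverse system, to the canonical inverse-limit presentation of some cut-and-project scheme — in particular identifying the correct window and verifying the combinatorial matching (aperiodicity, repetitivity, finite local complexity) that certify $P$ is a bona fide projection tiling. A secondary difficulty is checking compatibility at the seam: ensuring the same construction cannot be forced to cover $d=1$, which the excerpt explicitly flags, so I would record an example of a substitution attractor that fails to be a projection tiling space of the required form, confirming the two approaches are genuinely distinct.
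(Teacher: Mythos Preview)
Your $d=1$ sketch is essentially the paper's argument (following Barge--Diamond): orientability lets you orient the circles of $\bigvee^r S^1$ consistently, Williams' flattening axiom makes some power of the resulting substitution proper (forcing the border), and the Anderson--Putnam machinery identifies the inverse limit with $\Omega_\sigma$. One point you do not address is the degenerate case $r=1$, which yields a solenoid rather than a substitution tiling space; the paper rules this out by noting that a solenoid has nontrivial $L$-invariant and hence, by Theorem~\ref{ShapeSurf}, cannot sit in a surface.

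For $d\geqslant 2$ your logical structure is off, and the obstacle you flag is one the paper never faces. You take Theorem~\ref{thm1} as the engine and then, correctly, observe that a mere \emph{shape} equivalence with $\TT^{d+1}-\{k\}$ cannot by itself produce a \emph{homeomorphism} with a tiling space. But the paper does not use Theorem~\ref{thm1} at all in this direction. It invokes Plykin's Theorem~\ref{Por} directly: this already gives a topological conjugacy between $\overline{h}$ on the compactified basin and a $DA$-diffeomorphism of $\TT^{d+1}$, so $A$ is genuinely homeomorphic (not just shape equivalent) to the attractor of a $DA$-map obtained from a hyperbolic toral automorphism by inserting sources on finitely many periodic orbits. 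The remaining step is then a citation, not a construction: \cite[Ch.~III]{FHK1} shows that precisely these Denjoy-like spaces---$\TT^{d+1}$ cut along a countable family of $E$-orbits---arise as \emph{generalised} projection tiling spaces of internal dimension $1$. Thus the window and lattice data you propose to assemble by hand are already supplied in the literature, and Corollary~\ref{proj} in the paper is essentially two sentences. Note also that the tilings produced are generalised, not canonical, projection tilings; indeed Corollary~\ref{codim=comdim} shows canonical schemes of internal dimension $>1$ can never appear, so insisting on canonical windows would be too restrictive.
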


We consider also the converse question: given a tiling space $\Omega_P$, can we realise it as a codimension 1 attractor for some suitable $M$ and $h$? In general the answer is `no'. In the case of higher dimensional manifolds, $(d+1)\geqslant3$, the shape theoretic result of Theorem \ref{thm1} puts such constraints on the cohomology ring $H^*(\Omega_P)$ for any tiling $P$ which models $A$ that most tilings are immediately ruled out as sources of models for codimension 1 attractors.

In the case $d=1$, cohomology is not sufficient to rule out potential models. However, our shape-theoretic analysis leads us to obtain a new invariant $L(\Omega_P)$ associated to a tiling space, whose vanishing is a necessary condition on realising $\Omega_P$ as a codimension 1 subspace of a manifold. This obstruction is comparable, though apparently quite distinct from, obstructions based on the topology of the asymptotic components \cite{HM}, but as with them its vanishing does not in general guarantee the existence of an embedding $\Omega_P\hookrightarrow M$.

The $L$-invariant also provides a new tool to distinguish tiling spaces, and in Section \ref{sect44} we exhibit examples which cannot otherwise be told apart using standard cohomological or $K$-theoretic calculations.

Finally, let us note that many of our results fail to be true if we ask about attractors of codimension greater than 1: this may easily be seen in the case of the classic Smale example of the dyadic solenoid, which occurs as an oriented, codimension 2 attractor in a 3-torus, but is not shape equivalent to any finite polyhedron, nor is it homeomorphic to any tiling space. In contrast to this, Anderson and Putnam \cite{AP} show that every substitution tiling space $\Omega_P$ of the type they consider has the structure of an expanding attractor for {\em some\/} smooth diffeomorphism of a smooth (possibly high dimensional) manifold, but the natural question of which manifolds $M$ in which such $\Omega_P$ can occur is as yet unanswered.

The organisation of this paper is follows. In section 2 we recall the basic facts about shape theory and shape equivalence that we need. This leads us also to introduce some related homological algebra, and in particular discuss aspects of the $\lim^1$ functor and its relationship to the concept of {\em movability}. In section 3 we introduce concepts and notations we use to discuss tiling spaces, attractors and their associated paraphernalia. In this section we define our $L$-invariant (in fact the first of a series of invariants for tiling spaces), and recall the results of Plykin \cite{P1,P2} we need in the final section.

In Section 4 we specialise to $d=1$ and begin by proving Theorem \ref{thm1} in this case. We show in Section \ref{shapeof1} that any codimension 1 attractor in a surface is shape equivalent to a one point union of a finite number of circles, and as such is determined by a finite rank free group $F$ and an automorphism $s\colon F\to F$. However, most such automorphisms are not realisable as expanding attractors in surfaces and we develop in Section \ref{Realising} our homological approach to aid computation of our main obstruction to an automorphism arising via an attractor. We sketch in Section \ref{subtil} how all such oriented attractors in surfaces can be realised as substitution tiling spaces (Theorem \ref{thm2}), and apply in Section \ref{sect44} our $L$-invariant and homological results to demonstrate examples of non-embedding tiling spaces and to distinguish aperiodic tilings indistinguishable by cohomology or $K$-theory.

In Section 5 we consider the rather different case $d\geqslant2$, proving Theorems \ref{thm1} and \ref{thm2} in these dimensions. Here we introduce the generalised projection spaces needed for realising attractors in higher dimensions $d\geqslant2$, but show that their analogues cannot account for all attractors when $d=1$. We conclude by showing that, in analogy with the case $d=1$, most projection tilings do not possess codimension 1 embeddings in $(d+1)$-manifolds; this follows from cohomological considerations and the shape theoretic result of Theorem \ref{thm1}.

\section{Shape theory}\subsection{The shape category, stability and movability.}
We sketch the basic notions and perspectives of the shape theory we use. Fuller details may be found in, for example, the books \cite{Btext, MS}.

We deal with two underlying categories of spaces. The first, $\T$, has as objects topological spaces, and morphisms the homotopy classes of maps. The second, $\PP$, is the full subcategory of $\T$ with objects those spaces which can be given the structure of a finite CW complex (`finite polyhedra' in the shape literature). In each case we will also need the corresponding categories of pointed spaces: each such space $X_n$ will then have a specified base point $x_n$, and all maps and homotopies will preserve base points. In general we shall suppress mention of the base point in our notation unless it is expressly needed.

For each of our categories, we also consider the corresponding pro-categories (see \cite{MS}, or even \cite{AM}, for the full definition) of diagrams of objects indexed by a directed set $D.$ For the cases we consider, we can always take $D=\N$, in which case, an object in the pro-category pro-$\CC$  of the category $\CC$ is a tower $$\XX:\qquad\cdots\to X_n\to X_{n-1}\to\cdots \to X_2\to X_1\to X_0$$
whose objects $X_n$ and maps $X_n\to X_{n-1}$ are in $\CC$. Morphisms in pro-$\CC$ are equivalence classes of commuting maps of towers which do not necessarily preserve levels (i.e., a map $\XX\to\YY$ consists of maps in $\CC$ running $X_{r(n)}\to Y_n$, for  $n \in \N$, making the corresponding diagram commute and with $r(n)\to\infty$ monotonically as $n\to\infty$). Two commuting maps of towers are equivalent if they induce the same map on the inverse limits of the towers. The category $\CC$ has a standard embedding as a subcategory of pro-$\CC$ given by identifying a $\CC$-object $X$ with the constant tower
$$\cdots \to X\buildrel 1\over\longrightarrow X\buildrel 1\over\longrightarrow \cdots\buildrel 1\over\longrightarrow  X\buildrel 1\over\longrightarrow X$$
in pro-$\CC$, and without further comment we shall identify objects in $\CC$ as objects in pro-$\CC$ in this manner.

The shape category arises from certain equivalences on such towers, and considers those objects in $\T$ which, up to  these equivalences, can be considered as objects in pro-$\PP$. Explicitly, we use the notion of a {\em $\PP$-expansion\/} of a space $X$ in $\T$, which is effectively a representation of $X$ by a tower of spaces drawn from the subcategory $\PP$.

\begin{definition} A {\em $\PP$-expansion\/} of an object $X\in\T\subset$ pro-$\T$ is a map $\alpha\colon X\to\XX$ in pro-$\T$ for some object $\XX$ in pro-$\PP$ with the universal property that for each morphism $h \colon X\to\YY$ with $h$ in pro-$\T$ and $\YY$ in pro-$\PP$, there is a unique map $f\colon\XX\to\YY$ in pro-$\PP$ factoring $h$ as $X\buildrel \alpha\over\longrightarrow\XX\buildrel h\over\longrightarrow\YY$.
\end{definition}

An key result for shape theory is that every object in $\T$ admits a $\PP$-expansion.

\vspace{.02in}

It is important to note that if $X$ is homeomorphic to the inverse limit $\displaystyle\lim_{\longleftarrow}\{\XX\}$ for some object $\XX$ in pro-$\PP$, then the universal map
$$X=\lim_{\longleftarrow}\{\XX\}\longrightarrow \:\:\:\:\cdots\to X_n\to X_{n-1}\to\cdots \to X_0$$
gives a $\PP$-expansion of $X$, but the converse does not generally hold: if $\alpha\colon X\to\XX$ is a $\PP$-expansion of $X$ then there is no general reason that $X$ is homeomorphic to $\displaystyle\lim_{\longleftarrow}\{\XX\}$.

As usual, if $\alpha\colon X\to\XX$ and $\alpha'\colon X\to\XX'$ are two $\PP$-expansions of $X$, there is a natural isomorphism $i\colon \XX\to\XX'$ in pro-$\PP$.

We need a corresponding notion of equivalence on morphisms.

\begin{definition}Suppose $\alpha\colon X\to\XX$ and $\alpha'\colon X\to\XX'$ are two $\PP$-expansions of some object $X\in\T$, with natural isomorphism $i\colon \XX\to\XX'$ in pro-$\PP$, and suppose $\beta\colon Y\to\YY$ and $\beta'\colon Y\to\YY'$ are two $\PP$-expansions of some object $Y\in\T$, with natural isomorphism $j\colon \YY\to\YY'$ in pro-$\PP$. Then two morphisms $f\colon\XX\to\YY$ and $f'\colon\XX'\to\YY'$ in pro-$\PP$ are equivalent, written $f\sim f'$, if
$$\begin{array}{ccc}
\XX&\buildrel i\over\longrightarrow&\XX'\\
f\big\downarrow\phantom{f}&&\phantom{f'}\big\downarrow f'\\
\YY&\buildrel j\over\longrightarrow&\YY'
\end{array}$$
commutes in pro-$\PP$.
\end{definition}

\begin{definition}
The {\em shape category\/} has objects the objects of $\T$ and morphisms the $\sim$classes of morphisms on pro-$\PP$ of $\PP$-expansions of objects of $\T$. Two objects are $X,Y\in\T$ are then {\em shape equivalent\/} if they have $\PP$-expansions $\XX$ and $\YY$ equivalent in the shape category.
\end{definition}

Note that any morphism in the shape category may be represented by a diagram
$$\begin{array}{ccc}
X&\buildrel \alpha\over\longrightarrow&\XX\\
&&\phantom{f}\big\downarrow f\\
Y&\buildrel \beta\over\longrightarrow&\YY
\end{array}$$
for some $\PP$-expansions $\alpha$ and $\beta$, and morphism $f$ in pro-$\PP$. Indeed any map $X\to Y$ in $\T$ gives rise to such a diagram, but the converse does not hold: a morphism $f\colon\XX\to\YY$ does not necessarily correspond to a map $X\to Y$ in $\T$.

We are particularly interested in {\em shape invariants}, invariants of objects in $\T$ which depend only on the shape equivalence class of the objects. The principal invariants we are concerned with here are \v{C}ech cohomology and (in the pointed version) the shape homotopy groups, defined respectively on an object $X\in\T$ with $\PP$-expansion $\XX$ by
$$\begin{array}{rl}
H^*(X)&=\displaystyle\lim_{\longrightarrow}\{H^*(X_0)\to\cdots\to H^*(X_{n-1})\to H^*(X_n)\to\cdots\}, \quad\mbox{and}\\
\pi^{\rm sh}_*(X,*)&=\displaystyle\lim_{\longleftarrow}\{\quad\cdots\to \pi_*(X_n,x_n)\to  \pi_*(X_{n-1},x_{n-1})\to\cdots \to \pi_*(X_0,x_0)\}\,.\end{array}$$
The usual property of \v{C}ech cohomology taking inverse limits of spaces to direct limits of cohomology groups means that the above coincides with the normal definition of \v{C}ech cohomology of a space $X\in\T$. A similar definition of $K$-theory for $X\in\T$ with $\PP$-expansion $\XX$ given by the direct limit $K^*(X)=\displaystyle\lim_{\lra}\{ K^*(X_n)\}$ can also be made, is a shape invariant and coincides with other appropriate forms of $K$-theory, for example that constructed from $C^*$-algebras.

An important class of objects for us are those spaces $X\in\T$ which are shape equivalent to objects in $\PP$. This is encapsulated in the following definition.

\begin{definition}\label{stable} A space $X\in\T$ or pointed space $(X,x)$ is
\emph{stable} if it is shape equivalent to a finite polyhedron.
\end{definition}

\begin{remark} A sufficient condition for the stability of a space $X$ is  that $X$ may be written (in $\T$) as an inverse limit
$$X=\lim_{\longleftarrow} \{\cdots\to X_n\buildrel f_n\over\longrightarrow X_{n-1}\to\cdots \to X_0\}$$
in which all the factor spaces $X_n$ are homotopy equivalent to finite polyhedra and all the bonding maps $f_n$ are homotopy equivalences.  In this case the homotopy and (co)homology groups associated to the $X_n$ `stabilise' and all the shape invariants of $X$ coincide with the corresponding invariants of each of the $X_n$ in this $\PP$-expansion.
\end{remark}

The final shape theoretic concept we will need will be that of {\em movability}. Borsuk~\cite{B} introduced the notion of movability for compact subspaces $X$ of the Hilbert cube $Q$ as in the following definition, but, for our work here, the properties discussed in the remaining results of this section form the more practical characterisation of this concept.

\begin{definition}
Say $X \subset Q$ is {\em movable\/} if for every neighbourhood $U$ of $X$ in $Q$ there is a neighbourhood $U_0\subset U$ of $X$ in $Q$ such that for every neighborhood $W \subset U$ of $X$ there is a homotopy $$H: U_0 \times I \to U$$ such that for all $x \in U_0,$ $H(x,0) =x$ and $H(x,1) \in W.$
\end{definition}

In other words, $U_0$ can be homotopically deformed  within $U$, i.e. ``moved," into a subspace of $W$. Every compact metric space can be embedded in $Q$ and the definition of movability can be reformulated to make no reference to an embedding of $Q.$ We refer the reader to~\cite{MS} for the proof of the equivalence of the various formulations of movability; see in particular ~\cite[Remark 2, p. 184]{MS}.

\smallskip
It is important to note the following relationship between stability and movability. See, for example, \cite{MS} for details.

\begin{theorem}\label{stabmove} A space is movable if it is stable, but the converse does not necessarily hold.
\end{theorem}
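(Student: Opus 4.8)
The plan is to prove the two implications separately, working throughout with the pro-$\PP$ reformulation of movability: a tower $\XX=(\cdots\to X_n\to X_{n-1}\to\cdots)$ in pro-$\PP$ is \emph{movable} exactly when for each $n$ there is an $m\geqslant n$ such that the bonding map $X_m\to X_n$ factors, up to homotopy, through $X_k\to X_n$ for every $k\geqslant n$; and a space $X\in\T$ is movable iff some (equivalently every) $\PP$-expansion of it is movable as a tower. The equivalence of this with Borsuk's neighbourhood definition is standard and I would simply cite \cite{MS} for it. The one structural consequence I would extract is that movability, so formulated, is invariant under isomorphism in pro-$\PP$, and is therefore a shape invariant.

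For the forward implication (stable $\Rightarrow$ movable) the argument is then formal. If $X$ is stable it is by definition shape equivalent to a finite polyhedron $P$, so a $\PP$-expansion of $X$ is isomorphic in pro-$\PP$ to the constant tower on $P$, all of whose bonding maps are the identity. That constant tower is trivially movable: taking $m=n$, the required factorisation of the identity $X_m\to X_n$ through any $X_k\to X_n$ is immediate. Since movability passes across the pro-$\PP$ isomorphism, $X$ itself is movable.

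For the converse I would exhibit a single witness which is movable but not stable; the Hawaiian earring $\mathbb{H}$ (countably many circles of radius $1/n$ joined at a common point) is the cleanest choice. Writing $W_n=\bigvee^{n}S^{1}$ for the one-point union of $n$ circles, I would first present the $\PP$-expansion $\mathbb{H}=\displaystyle\lim_{\longleftarrow}(\cdots\to W_{n+1}\to W_n\to\cdots)$, where each bonding map collapses the last circle to the basepoint; a short compactness-plus-bijectivity check identifies the inverse limit with $\mathbb{H}$. The point is that each bonding map is a retraction, split by the inclusion $W_n\hookrightarrow W_{n+1}$, so movability is immediate: with $m=n$ and $g$ the iterated inclusion $W_n\hookrightarrow W_k$, the composite with the collapse $W_k\to W_n$ is the identity, giving the required factorisation. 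That $\mathbb{H}$ is not stable follows from \v{C}ech cohomology: $H^1(\mathbb{H})=\displaystyle\lim_{\longrightarrow}H^1(W_n)=\bigoplus_{\N}\Z$ is not finitely generated, whereas every finite polyhedron has finitely generated cohomology, so $\mathbb{H}$ cannot be shape equivalent to one.

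The genuinely substantial input, and hence the main obstacle, is the equivalence between Borsuk's geometric definition and the pro-$\PP$ characterisation together with the shape-invariance of movability; these are the facts I would import from \cite{MS} rather than reprove. Granting them, the forward direction is purely formal and the converse reduces to the routine cohomology computation for $\mathbb{H}$. If instead one insisted on arguing directly from the Hilbert-cube definition, the delicate step would be verifying movability of $\mathbb{H}$ at the level of neighbourhoods of $Q$; the retraction structure of the expansion above is precisely what makes this tractable, since it lets one build deformations that push the small circles into the basepoint.
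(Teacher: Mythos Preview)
Your argument is correct. Note, however, that the paper does not actually prove Theorem~\ref{stabmove}: it simply refers the reader to \cite{MS} and then offers an informal heuristic for the forward implication in terms of the Hilbert-cube embedding (extending the projections $X\to X_n$ to neighbourhoods via the ANR property and using the homotopy equivalences of the bonding maps to ``move'' one neighbourhood into the next). The paper gives no explicit witness for the failure of the converse.

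Your treatment is therefore more complete and takes a somewhat different route: you work entirely in the pro-$\PP$ formalism, reducing ``stable $\Rightarrow$ movable'' to the trivial movability of a constant tower together with the shape invariance of movability, and you supply the Hawaiian earring as an explicit counterexample, verifying movability via the retraction structure of the expansion and non-stability via the non-finite-generation of $H^1$. Both approaches ultimately lean on \cite{MS} for the equivalence of the geometric and pro-categorical notions of movability; what your version buys is a self-contained argument once that equivalence is granted, at the cost of importing slightly more abstract machinery than the paper's neighbourhood-deformation sketch.
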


An informal, intuitive explanation for why stability as above implies movability is as follows. If a stable space $X$ is embedded in the Hilbert cube $Q$, then each projection $p_n\colon X \to X_n$ will extend (since each $X_n$ is an absolute neighborhood retract) to a neighborhood $\widetilde{p_n}\colon U_n \to X_n$ and one can choose these neighbourhoods to be decreasing to $X$, say $X=\cap U_n$ and $U_n \supset U_{n+1}.$ One can then ``move" a given $U_n$ into $U_{n+1}$ using the homotopy equivalence of the corresponding bonding map. In general, however, one can move $U_n$ into $U_{n+1}$ under weaker conditions.

\smallskip
However, we work primarily with a homological characterisation of movability, which will be more amenable than the definition above. First we recall

\begin{definition} The inverse sequence of groups and homomorphisms
$$\cdots\to A_2\buildrel a_2\over\lra A_1\buildrel a_1\over\lra A_0$$
satisfies the {\em Mittag-Leffler condition} {\em (ML)} if for each $n$ there is a number $N\geqslant n$ such that
$$\im(A_p\to A_n)=\im(A_q\to A_n)$$
for all $p,q>N$. Clearly if $p>q>n$, the image of $A_p$ in $A_n$ is contained in the image of $A_q$ in $A_n$; the system
is ML if the images of $A_p$ in $A_n$ are eventually constant for large values of $p$. The condition is obviously met in the case that
all the bonding homomorphisms $a_n$ are surjective.
\end{definition}

\begin{prop}\label{Move}\cite[Remark 3, p.$\,$184]{MS}
If the pointed space $(X,*)$ is movable and
$$\cdots\to(X_{n},x_{n}) \buildrel{f_n}\over\longrightarrow
(X_{n-1},x_{n-1})\to\cdots\to(X_0,x_0)$$ is \textit{any} inverse sequence of compact polyhedra with limit homeomorphic to $(X,*)$, then for each non-negative integer $k$ the resulting inverse sequence of homotopy
groups
$$\cdots\to\pi_k(X_{n},x_{n}) \buildrel{(f_n)_*}\over\longrightarrow
\pi_k(X_{n-1},x_{n-1})\to\cdots\to\pi_k(X_0,x_0)$$
%\[\{\pi_k(X_n,x_n);\quad(f_n)_*\colon\pi_k(X_{n},x_{n})\rightarrow \pi_k(X_{n-1},x_{n-1})\}\]
satisfies the Mittag-Leffler condition.
\end{prop}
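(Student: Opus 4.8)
The plan is to reduce the assertion to the standard pro-homotopy reformulation of movability and then to apply the functors $\pi_k$ termwise. The key translation is that (pointed) movability of $(X,*)$, as defined above via neighbourhoods in the Hilbert cube, is equivalent to a purely categorical movability condition on any inverse sequence of compact polyhedra representing $(X,*)$: writing $p_{n\ell}\colon X_\ell\to X_n$ ($\ell\geqslant n$) for the composite bonding map of the sequence, the condition is that for every index $n$ there is an $m\geqslant n$, a \emph{movability index} for $n$, such that for each $\ell\geqslant n$ there is a (base-point preserving) map $r\colon X_m\to X_\ell$ in $\T$ with
$$p_{nm}\ \simeq\ p_{n\ell}\circ r\,.$$
In words, the level-$m$ term can be ``moved back'' to any higher level $\ell$ without disturbing its image at level $n$. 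This is precisely the pointed version of Borsuk's condition recast for ANR-systems, and I would invoke \cite{MS} for the equivalence of the two formulations.

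Next I would justify that this reformulation applies to the \emph{given} sequence. Since $(X,*)$ is compact metric, any inverse sequence of compact polyhedra with limit homeomorphic to $(X,*)$ provides a $\PP$-expansion of $(X,*)$, the expansion being realised by the limit projections. As noted in the excerpt, any two $\PP$-expansions of an object of $\T$ are naturally isomorphic in pro-$\PP$, and movability is an invariant of the pro-homotopy isomorphism type; hence the Hilbert-cube movability of $(X,*)$ may be read off on this particular expansion, yielding the movability indices $m$ above. The only non-formal ingredient here is the passage from the deforming homotopy $H\colon U_0\times I\to U$ of the definition to the factorisation $p_{nm}\simeq p_{n\ell}\circ r$; this is obtained by extending the limit projections to ANR neighbourhoods of $X$ in $Q$ and composing with $H$, and is exactly the content referenced at \cite[Remark 3, p.\,184]{MS}. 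I expect this translation to be the main obstacle, in the sense that it is where all the genuine shape theory resides; everything after it is formal.

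With the factorisation in hand, the Mittag-Leffler conclusion is a short functorial argument. Fix $k$ and an index $n$, and let $m\geqslant n$ be a movability index for $n$; I claim $N=m$ works. For any $\ell\geqslant m$, applying the functor $\pi_k$ to the factorisation $p_{n\ell}=p_{nm}\circ p_{m\ell}$ gives $\im\bigl(\pi_k(X_\ell)\to\pi_k(X_n)\bigr)\subseteq\im\bigl(\pi_k(X_m)\to\pi_k(X_n)\bigr)$, which is the easy inclusion already noted in the Mittag-Leffler definition. Conversely, applying $\pi_k$ to $p_{nm}\simeq p_{n\ell}\circ r$ and using that $\pi_k$ respects homotopies and compositions gives $\pi_k(p_{nm})=\pi_k(p_{n\ell})\circ\pi_k(r)$, whence $\im\bigl(\pi_k(X_m)\to\pi_k(X_n)\bigr)\subseteq\im\bigl(\pi_k(X_\ell)\to\pi_k(X_n)\bigr)$. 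Combining the two inclusions, the images $\im\bigl(\pi_k(X_\ell)\to\pi_k(X_n)\bigr)$ coincide with $\im\bigl(\pi_k(X_m)\to\pi_k(X_n)\bigr)$ for every $\ell\geqslant m$, so in particular they agree for all $p,q>m$. Thus the inverse sequence of homotopy groups satisfies (ML) with $N=m$, as required; the argument is identical for each $k$, the case $k=0$ being read in pointed sets rather than groups.
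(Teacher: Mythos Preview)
Your argument is correct and is essentially the standard one: reformulate movability as the existence of movability indices in the pro-category (the factorisation $p_{nm}\simeq p_{n\ell}\circ r$), note that this is invariant under isomorphism in pro-$\PP$ so it holds for the given sequence, and then apply $\pi_k$ termwise to obtain the ML condition. There is nothing to compare against here, however: the paper does not supply its own proof of this proposition but simply cites it from \cite[Remark 3, p.\,184]{MS}, so your sketch is in effect a reconstruction of the argument the paper defers to that reference.
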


%When $(X,*)$ is compact and metrisable, there are inverse sequences for $(X,*)$  in which all the groups $\pi_k(X_n,x_n))$ can be chosen to be finitely generated, and in this case the ML condition is equivalent to the vanishing of ${\rm lim}\sp{1}$ by Theorem~\ref{li1}.

In the special case of one-dimensional continua, a converse also holds.
%, and for us this will be the primary characterisation and application of the concept of movability.

\begin{theorem}\label{1dMove}
If the pointed space $(X,*)$  is homeomorphic to the limit of the
inverse sequence of finite connected one-dimensional polyhedra
$\left((X_n,x_n);\:f_n\right)$, then $(X,*)$ is movable if and only
if
\[\{\pi_1(X_n,x_n); (f_n)_*\}
\]
satisfies the ML condition.
\end{theorem}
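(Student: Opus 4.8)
The statement is an equivalence, and one direction is already in hand. If $(X,*)$ is movable, then applying Proposition \ref{Move} with $k=1$ to the given inverse sequence $((X_n,x_n);f_n)$ shows at once that $\{\pi_1(X_n,x_n);(f_n)_*\}$ satisfies ML. So the work lies entirely in the converse, and my plan for it rests on two features special to dimension one. First, every finite connected one-dimensional polyhedron is homotopy equivalent to a wedge of circles, hence is aspherical --- a $K(F,1)$ for a finitely generated \emph{free} group $F$ --- so each $\pi_1(X_n,x_n)$ is free and the spaces have no higher homotopy. Second, because the $X_n$ are Eilenberg--MacLane spaces, pointed homotopy classes of pointed maps are computed by $\pi_1$: for all $m,k$ there is a natural bijection $[(X_m,x_m),(X_k,x_k)]\cong \mathrm{Hom}(\pi_1(X_m,x_m),\pi_1(X_k,x_k))$.

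Consequently, the functor $\pi_1$ carries the given $\PP$-expansion, regarded as a tower in the pointed homotopy category, isomorphically onto the tower of free groups $\{\pi_1(X_n,x_n);(f_n)_*\}$ in the category of pro-groups. Since movability of $(X,*)$ is detected on any $\PP$-expansion in the pro-homotopy category (the standard reduction of \cite{MS}), I would reduce the problem to the purely algebraic statement that a tower of free groups satisfying ML is movable as a pro-group; that is, for each $n$ there is $m\geq n$ such that the bonding homomorphism $\phi_{nm}\colon\pi_1(X_m,x_m)\to\pi_1(X_n,x_n)$ factors, as a homomorphism, through $\phi_{nk}\colon\pi_1(X_k,x_k)\to\pi_1(X_n,x_n)$ for every $k$.

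To verify this, I would fix $n$ and use ML to choose $m\geq n$ so that $\im(\phi_{np})$ is independent of $p$ for $p\geq m$. For $n\leq k\leq m$ the required factorization is supplied by the bonding map $\phi_{km}$ itself, since $\phi_{nk}\circ\phi_{km}=\phi_{nm}$. For $k\geq m$ the stable-image condition gives $\im(\phi_{nm})=\im(\phi_{nk})$, so each free generator $a$ of $\pi_1(X_m,x_m)$ satisfies $\phi_{nm}(a)\in\im(\phi_{nk})$; choosing a preimage in $\pi_1(X_k,x_k)$ for each generator and extending --- here using precisely that $\pi_1(X_m,x_m)$ is free --- produces a homomorphism $r\colon\pi_1(X_m,x_m)\to\pi_1(X_k,x_k)$ with $\phi_{nk}\circ r=\phi_{nm}$. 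This is the desired factorization, so the tower of groups, and hence $(X,*)$, is movable.

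The crux of the argument, and the point at which one-dimensionality is indispensable, is this final lifting step: freeness of the fundamental groups is exactly what promotes a set-theoretic choice of preimages on generators to a genuine group homomorphism. I expect the only care needed to be in (a) confirming the reduction of pointed shape movability to movability of the pro-$\pi_1$, which depends on asphericity so that no higher homotopy obstructs the factorizations, and (b) bookkeeping of the index ranges in the pro-category definition of movability. Both of these obstacles disappear in dimension one but have no higher-dimensional analogue, consistent with the fact that Proposition \ref{Move} admits no converse in general.
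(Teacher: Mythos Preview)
Your argument is correct. The paper, however, does not give its own proof of this theorem at all: it simply records that the statement follows from the stronger result \cite[Theorem~4, II \S8.1, p.~200]{MS} and moves on. So there is nothing to compare at the level of detail.

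That said, what you have written is essentially the reason the cited theorem works in dimension one, and it is worth noting what each approach buys. The paper's citation imports a general shape-theoretic fact and avoids any argument; your route is self-contained and makes transparent exactly where one-dimensionality enters---namely that finite connected graphs are $K(F,1)$'s with $F$ free, so (i) the pro-homotopy tower is faithfully captured by the pro-$\pi_1$ tower, and (ii) the ML condition can be promoted to the movability factorizations because free groups are projective in the category of groups. Your lifting step (choosing preimages of free generators and extending) is precisely this projectivity. The only point to be slightly careful about is the passage from movability of the pro-group to pointed movability of $(X,*)$: you need that the realized map $X_m\to X_k$ can be taken to be basepoint-preserving, but this is immediate from the bijection $[(X_m,x_m),(X_k,x_k)]\cong\mathrm{Hom}(\pi_1(X_m,x_m),\pi_1(X_k,x_k))$ that you already invoked.
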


This follows from the stronger theorem~\cite[Theorem 4, II $\S$ 8.1,
p. 200]{MS}.

\begin{remark} We should point out that shape theoretic results stated
in terms of inverse sequences hold for \emph{any} shape expansion of
a space into an inverse system and include such expansions as the
\v{C}ech expansion~\cite[I,\S4.2]{MS} whose inverse limit is not
necessarily homeomorphic to the original space. Any representation
of a continuum as an inverse limit of finite CW-complexes does yield
a shape expansion~\cite[I,\S5.3]{MS} and since these are the
expansions readily available for tiling
spaces~\cite{AP,BD,BDHS,Ga,S}, we shall only state results in that
context.
\end{remark}

Movability and its characterisation in Theorem \ref{1dMove} are relevant to the understanding of the embeddings in surfaces in the light of the next result.

\medskip
\begin{theorem}~\cite[Theorem 7.2]{K}~\cite{M}\label{sufmov} If $X$ is a
subcontinuum of a closed surface and if $x$ is any point of $X$,
then $(X,x)$ is movable.
\end{theorem}

In fact, in his proof~\cite{K} Krasinkiewicz shows that any such
$(X,x)$ is shape equivalent to the wedge of finitely many circles or
the Hawaiian earring (with point given by the wedge point), but his
proof only treats the case that the ambient manifold is orientable.
This is a natural generalisation of the analogous result for
continua embedded in the plane obtained by Borsuk~\cite[VII,\S
7]{Btext}.

%For the convenience of the reader we recall the notions from shape theory that we use to formulate our results. We need to draw a connection between properties of inverse (projective) sequences of (not necessarily abelian) groups that arise from inverse sequences of topological spaces, and we begin by examining properties for inverse sequences of groups.

\subsection{Some homological algebra}

The identification of the Mittag-Leffler condition above being relevant to our discussion leads us to introduce some further homological algebra, culminating below in Theorem \ref{ShapeSurf}.

\begin{definition}
For an inverse sequence $\mathcal{A}$ of groups and homomorphisms

$$\cdots\to A_2\buildrel a_2\over\lra A_1\buildrel a_1\over\lra A_0$$

\noindent let the equivalence relation $\approx$ on $\prod_n A_n$ be given by
$(x_n) \approx (y_n)$ if and only if there is a $(g_n)\in \prod_n
A_n$ such that $(y_n)=(g_n \cdot x_n \cdot a_{n+1}(g_{n+1}^{-1}))$. Then
${\rm lim}\sp{1}\A$ is defined to be the pointed set of
$\approx$--classes with base point given by the class of the
identity element of $\prod_n A_n$.
\end{definition}

If $d: \prod_n A_n \rightarrow \prod_n A_n$ is given by
$d((x_n))=(x_n \cdot a_{n+1}(x_{n+1}^{-1}))$, then ${\rm
lim}\sp{1}\A$ is the trivial pointed set $\{*\}$ if and only if $d$ is onto. If
$d$ is a homomorphism (which is the case whenever each $A_n$ is abelian), ${\rm coker}\,d ={\rm lim}\sp{1}\A.$ In general, $\li\A$ is uncountable if it is not trivial.

\medskip

The following theorem follows  from general considerations, see,
e.g.,~\cite[Theorem 10, II $\S$ 6.2, p. 173]{MS}.

\begin{theorem}\label{MLlim} If the inverse sequence $ \A $
satisfies the ML condition, then $\li \A $ is trivial.
\end{theorem}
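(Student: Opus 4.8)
The final statement to prove is Theorem \ref{MLlim}: if an inverse sequence $\A$ of groups satisfies the Mittag-Leffler condition, then $\li\A$ is trivial. Let me sketch how I would prove this.

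\medskip

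The plan is to show directly that the map $d\colon \prod_n A_n \to \prod_n A_n$ given by $d((x_n)) = (x_n \cdot a_{n+1}(x_{n+1}^{-1}))$ is onto, since we are told in the excerpt that $\li\A$ is trivial precisely when $d$ is surjective. So, given an arbitrary target element $(y_n) \in \prod_n A_n$, I must construct a preimage $(x_n)$, i.e. a sequence satisfying $y_n = x_n \cdot a_{n+1}(x_{n+1}^{-1})$ for every $n$. Equivalently, unwinding the recursion, I need to produce $(x_n)$ with $x_n = y_n \cdot a_{n+1}(x_{n+1})$, so each $x_n$ is determined by $x_{n+1}$, and the whole sequence is determined by making consistent choices ``from infinity.'' The difficulty is that there is no largest index to start the recursion, so I cannot simply define the $x_n$ by downward induction from a top term.

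\medskip

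The key device is to exploit the ML condition to replace each $A_n$ by the eventually-constant image subgroup. For each $n$, let $B_n = \im(A_p \to A_n)$ for $p$ sufficiently large; by ML this subgroup is well-defined (independent of large $p$), and the bonding maps $a_n$ restrict to \emph{surjections} $B_n \twoheadrightarrow B_{n-1}$. The heart of the argument is then: the $\li^1$ of a tower with all surjective bonding maps is trivial (this is the ``obviously met'' remark in the definition of ML), and one must check that passing from $\A$ to the subtower $(B_n)$ does not change $\li^1$. First I would verify the surjectivity claim for the $B_n$: given $b \in B_{n-1}$, write $b = a_p(c)$ image of some $c \in A_{p}$ with $p$ large; applying surjectivity of images at level $n$ shows $b$ lifts into $B_n$, using that $a_{n}$ carries $B_n$ onto $B_{n-1}$. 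Then, for a surjective tower, I construct the preimage of any $(y_n)$ by downward induction \emph{within each initial segment}: choose $x_n$ freely far out and solve $x_n = y_n \cdot a_{n+1}(x_{n+1})$ going down, which is always solvable because $a_{n+1}$ is onto $B_n$.

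\medskip

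The main obstacle I expect is the bookkeeping needed to show that $\li^1$ computed on the full tower $\A$ agrees with $\li^1$ computed on the ML-image subtower $(B_n)$ — that is, that the extra ``transient'' parts of each $A_n$ outside $B_n$ contribute nothing to the cokernel-like quotient. Concretely, given a target $(y_n) \in \prod_n A_n$, I must first adjust it by an element in the image of $d$ so that its entries land in the $B_n$, and then solve within the surjective subtower. This reduction step, rather than the surjective-tower solvability, is where the real content lies; it amounts to checking that every coset of $\im d$ in $\prod_n A_n$ meets the image-subgroup tower. Since the theorem is cited from \cite[Theorem 10, II $\S$ 6.2]{MS} as following ``from general considerations,'' I anticipate the author will simply invoke that reference; my own proof would carry out the surjective-reduction argument above, taking care that all the manipulations make sense in the non-abelian setting (where $\li^1$ is only a pointed set), so that I never implicitly assume commutativity when composing the correction terms $a_{n+1}(g_{n+1}^{-1})$.
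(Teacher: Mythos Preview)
Your anticipation is correct: the paper gives no proof of this theorem at all, merely citing \cite[Theorem 10, II $\S$ 6.2, p.~173]{MS} and saying it ``follows from general considerations.'' So there is nothing to compare against on the paper's side.

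Your own sketch is essentially the standard argument and is correct in outline. Two small points are worth tightening. First, in the surjective-tower step your induction runs the wrong way: you cannot ``choose $x_n$ freely far out'' since there is no top index. The correct construction is \emph{upward}: set $x_0=e$, and given $x_n\in B_n$ use surjectivity of $a_{n+1}|_{B_{n+1}}\colon B_{n+1}\to B_n$ to pick $x_{n+1}\in B_{n+1}$ with $a_{n+1}(x_{n+1})=y_n^{-1}x_n$. Second, your reduction step (pushing an arbitrary $(y_n)$ into $\prod_n B_n$ up to $\approx$) can be made completely explicit once you reindex so that $\im a_{n+1}=B_n$: take $g_n=y_n^{-1}$, so that $g_n\,y_n\,a_{n+1}(g_{n+1})^{-1}=a_{n+1}(y_{n+1})\in B_n$. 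With these two fixes your argument goes through cleanly, and no commutativity is used anywhere.
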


\medskip

In~\cite{G} Geoghegan shows the converse under a natural condition.
\medskip
\begin{theorem}\label{li1}~\cite{G} If each group $A_n$ in the inverse sequence $\A$ is countable and $\li \A$ is
trivial, then $\A$ satisfies the ML condition.
\end{theorem}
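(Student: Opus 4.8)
The plan is to prove Theorem~\ref{li1} by establishing the contrapositive: assuming the inverse sequence $\A$ of countable groups fails the ML condition, I will produce infinitely many distinct $\approx$-classes, so that $\li\A$ is nontrivial (indeed, by the final remark of the preceding definition, uncountable). First I would unwind the failure of ML. Negating the condition gives a fixed index $n$ such that for \emph{every} $N\geqslant n$ there exist $p,q>N$ with $\im(A_p\to A_n)\neq\im(A_q\to A_n)$. Since by the observation in the definition these images are nested decreasing in the sense that $\im(A_p\to A_n)\subseteq\im(A_q\to A_n)$ whenever $p>q>n$, the images $\im(A_m\to A_n)$ form a non-increasing chain of subgroups (or subsets) of $A_n$ that does \emph{not} stabilise. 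Thus I can extract a strictly increasing sequence of indices $n=m_0<m_1<m_2<\cdots$ along which the images $G_j:=\im(A_{m_j}\to A_n)$ are strictly decreasing, $G_0\supsetneq G_1\supsetneq G_2\supsetneq\cdots$, and by passing to this cofinal subsequence I may assume (after reindexing) that the whole tower has strictly decreasing images in the bottom group $A_0$.

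Next I would exploit countability. Because each $A_n$ is countable, the set of elements of $A_n$ and hence the nested images are all countable, and I can enumerate representatives witnessing the strict drops. The key construction is to build, for each choice function over the ``new'' elements entering the kernel-of-drop at each stage, a coherent sequence $(x_n)\in\prod_n A_n$ whose class in $\li\A$ is nontrivial, and to arrange that distinct choices give $\approx$-inequivalent sequences. Concretely, the boundary map $d((x_n))=(x_n\cdot a_{n+1}(x_{n+1}^{-1}))$ has image that, projected to the bottom, can only reach elements of $A_0$ lying in $\bigcap_m\im(A_m\to A_0)$ in a controlled way; the failure of ML means the images do not collapse, so the cokernel-type pointed set of $d$ cannot be a single point. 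I would make this precise by showing that an element $(y_n)$ built to have a prescribed, non-liftable ``tail'' cannot be written as $d((g_n))$ for any $(g_n)$, because solving $d((g_n))=(y_n)$ level by level forces $g_n$ to lie in successively smaller images that eventually cannot accommodate the prescribed $y_n$.

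The main obstacle I anticipate is the non-abelian bookkeeping. When the $A_n$ are abelian, $d$ is a homomorphism and $\li\A=\operatorname{coker}d$, so the argument reduces to a clean statement: failure of ML means the images in each $A_n$ do not stabilise, and a standard diagonal/back-and-forth argument over the countable groups produces an element not in the image of $d$, giving $\operatorname{coker}d\neq0$. In the general group case $d$ is only a map of pointed sets and $\li\A$ is a quotient by the twisted action $(y_n)=(g_n\cdot x_n\cdot a_{n+1}(g_{n+1}^{-1}))$, so I must track the orbit structure rather than a subgroup quotient. The care needed is to verify that solving the defining equation recursively, $g_n = y_n^{-1}\,\text{(stuff)}\,a_{n+1}(g_{n+1})$ run downward, genuinely forces $g_0$ into the non-stabilising image and derive a contradiction from countability; I would lean on the explicit description of $\approx$ and on a Baire-category or diagonalisation argument over the countably many ``moving'' representatives to guarantee a genuinely non-trivial class. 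This is precisely the content Geoghegan supplies in~\cite{G}, and I would cite that source for the delicate non-abelian diagonalisation while presenting the abelian case, which suffices for most of our applications, in full.
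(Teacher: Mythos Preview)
The paper does not supply a proof of this theorem at all: it is stated purely as a citation of Geoghegan~\cite{G}. So there is nothing in the paper to compare your argument against beyond the bare reference.

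That said, your sketch is along the right lines and in fact goes further than the paper does. The contrapositive setup (fix a level $n$ at which the images fail to stabilise, pass to a cofinal subsequence with strictly decreasing images, then use countability to diagonalise and produce a sequence not $\approx$-equivalent to the identity) is the shape of Geoghegan's argument. You are also right to flag that the non-abelian case is where the genuine content lies: the twisted action defining $\approx$ means one cannot simply talk about cokernels, and the actual proof in~\cite{G} (and its erratum) uses a careful orbit-counting argument to show $\li\A$ is in fact uncountable when ML fails. Your proposal correctly identifies this as the obstacle and, like the paper, ultimately defers to the citation for it. One small point: your phrase ``forces $g_0$ into the non-stabilising image'' is vague enough that it is not yet a proof even in the abelian case; the diagonalisation needs to be written out explicitly against an enumeration of $\prod A_n$-representatives, which is where countability is genuinely used.
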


An advantage of  $\li \A$ over the ML condition is that it is more
amenable to calculation, as indicated by the following result that we shall use.

\medskip
\begin{lemma} ~\cite[Theorem 8, II $\S$ 6.2, p. 168]{MS} Given a short exact sequence of inverse systems of groups
$$1\to (A_n,a_n)\to (B_n,b_n)\to (C_n,c_n)\to 1,$$
that is, a commutative diagram

$$\begin{array}{rccccl}
&1&&1&&1
\\
&\big\downarrow&&\big\downarrow&&\big\downarrow
\\
\cdots\to&A_2&\buildrel a_2\over\lra&A_1&\buildrel a_1\over\lra&A_0
\\
&\big\downarrow&&\big\downarrow&&\big\downarrow
\\
\cdots\to&B_2&\buildrel b_2\over\lra&B_1&\buildrel b_1\over\lra&B_0
\\
&\big\downarrow&&\big\downarrow&&\big\downarrow
\\
\cdots\to&C_2&\buildrel c_2\over\lra&C_1&\buildrel c_1\over\lra&C_0
\\
&\big\downarrow&&\big\downarrow&&\big\downarrow
\\
&1&&1&&1
\end{array}$$
in which the columns are exact, there is an induced six term exact
sequence of pointed sets
\begin{equation}\label{6term}1\to\lim_{\longleftarrow}  A_n\to\lim_{\longleftarrow}  B_n\to\lim_{\longleftarrow}  C_n\to\lim\!\null^1 A_n\to\lim\!\null^1 B_n\to\lim\!\null^1 C_n\to1\,.\end{equation}

\end{lemma}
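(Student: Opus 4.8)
The plan is to recognise this as the nonabelian snake lemma applied to the two-term ``difference complexes'' that compute $\lim_{\longleftarrow}$ and $\li$. For each system I write $d_A\colon\prod_n A_n\to\prod_n A_n$ for the map $d_A((x_n))=(x_n\cdot a_{n+1}(x_{n+1}^{-1}))$, and likewise $d_B,d_C$. By the definitions recalled above, $\lim_{\longleftarrow}A_n$ is the set of coherent sequences $\{(x_n): x_n=a_{n+1}(x_{n+1})\}$, equivalently the stabiliser of the basepoint $(1)$ under the twisted action $(g_n)\cdot(x_n)=(g_n\,x_n\,a_{n+1}(g_{n+1}^{-1}))$, while $\li A_n$ is the pointed orbit set of that action (the basepoint being the orbit of $(1)$, which is exactly $\im d_A$); the same descriptions hold for $B$ and $C$. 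The level-wise exact sequences $1\to A_n\to B_n\to C_n\to1$ assemble, on taking products, into a short exact sequence of groups $1\to\prod_n A_n\buildrel\iota\over\lra\prod_n B_n\buildrel\pi\over\lra\prod_n C_n\to1$, since arbitrary products preserve exactness of groups (injectivity and middle-exactness are immediate, and surjectivity holds because preimages may be chosen coordinatewise). Because the bonding maps commute with $\iota$ and $\pi$, the diagram
$$\begin{array}{ccccc}
\prod_n A_n&\buildrel\iota\over\lra&\prod_n B_n&\buildrel\pi\over\lra&\prod_n C_n\\[2pt]
\big\downarrow d_A&&\big\downarrow d_B&&\big\downarrow d_C\\[2pt]
\prod_n A_n&\buildrel\iota\over\lra&\prod_n B_n&\buildrel\pi\over\lra&\prod_n C_n
\end{array}$$
commutes, and this is the data to which a snake-lemma argument applies.

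Next I would construct the six maps. The four maps induced by $\iota$ and $\pi$ (two between the $\lim_{\longleftarrow}$ terms, two between the $\li$ terms) are given by restriction to coherent sequences and by passage to orbit sets respectively; equivariance of $\iota,\pi$ for the twisted actions makes the orbit-set maps well defined. The heart of the construction is the connecting map $\delta\colon\lim_{\longleftarrow}C_n\to\li A_n$: given a coherent $(c_n)$, lift each $c_n$ to some $b_n\in B_n$; then $\pi(d_B((b_n)))=d_C((c_n))=(1)$ because $(c_n)$ is coherent, so $d_B((b_n))=\iota((x_n))$ for a unique $(x_n)\in\prod_n A_n$, and I set $\delta((c_n))=[(x_n)]\in\li A_n$. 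I would then check that $\delta$ is well defined: a second lift differs from the first by an element of $\prod_n A_n$, and a direct computation with the twisted action shows the two resulting $(x_n)$ lie in the same $A$-orbit.

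It then remains to verify exactness at each of the six positions, where for the pointed sets $\li$ ``exactness'' means that the image of one map equals the preimage of the basepoint under the next. Exactness at $\lim_{\longleftarrow}A_n$ and at $\lim_{\longleftarrow}B_n$ is inherited from the level exact sequences restricted to coherent sequences. At $\lim_{\longleftarrow}C_n$: $\delta((c_n))$ is the basepoint iff the associated $(x_n)$ satisfies $x_n=g_n\,a_{n+1}(g_{n+1}^{-1})$ for some $(g_n)$, and replacing the lift $(b_n)$ by $(\iota(g_n)^{-1}b_n)$ then yields a coherent lift, so $(c_n)$ lies in the image of $\lim_{\longleftarrow}B_n$; the converse is immediate since a coherent lift forces $(x_n)=(1)$. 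At $\li A_n$ and at $\li B_n$ the checks run the same way: the statement that a class dies in the next $\li$ means precisely that the relevant difference is trivialised by some $(h_n)\in\prod_n B_n$ (resp. some $(g_n)\in\prod_n B_n$ lifting an element of $\prod_n C_n$), and $\pi((h_n))$ is then coherent and realises the class via $\delta$ (resp. the adjusted $(g_n)\cdot(y_n)$ lands in $\iota(\prod_n A_n)$, exhibiting the class as $\iota_*$ of something). Finally the terminal $\to1$ is the surjectivity of $\li B_n\to\li C_n$, which holds because any $(c_n)\in\prod_n C_n$ lifts coordinatewise to $(b_n)$ with $[(b_n)]\mapsto[(c_n)]$.

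The main obstacle is purely the nonabelian bookkeeping: since $d_A,d_B,d_C$ are not homomorphisms and each $\li$ is merely a pointed set, one cannot quote the ordinary snake lemma verbatim, and every exactness verification must be carried out by explicit manipulation of the twisted action $(g_n)\cdot(x_n)=(g_n x_n a_{n+1}(g_{n+1}^{-1}))$ rather than by subtracting homomorphisms. The computations are elementary but demand care to keep the order of multiplication correct and to confirm that changing a lift only moves the relevant element within a single orbit; once this is organised the six-term sequence follows formally. (In the abelian case everything collapses to the familiar long exact $\lim_{\longleftarrow}$--$\li$ sequence, since then each $d$ is a homomorphism and $\li=\mathrm{coker}\,d$.)
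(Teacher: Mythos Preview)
The paper does not actually prove this lemma; it is stated with a citation to \cite[Theorem 8, II $\S$ 6.2, p. 168]{MS} and used as a black box. Your argument is the standard one and is correct: recognising $\lim_{\longleftarrow}$ and $\li$ as the stabiliser and orbit set of the twisted action, assembling the level-wise exact sequences into a product diagram intertwining the $d$-maps, and then running a nonabelian snake construction. Your well-definedness check for $\delta$ and the exactness verifications are the right computations, and your closing caveat about the nonabelian bookkeeping (that $d$ is not a homomorphism and one must track orbits rather than cosets) correctly identifies where the care is needed. There is nothing to correct.
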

(An exact sequence of pointed sets satisfies the usual conditions
for an exact sequence of groups, where the kernel is understood to
be the pre-image of the base point of the pointed set.)

\medskip
Piecing together the above results we arrive at the following theorem.

\begin{theorem}\label{ShapeSurf} If $X$ is homeomorphic to the inverse limit of the
sequence of  finite polyhedra $\left((X_n,x_n);\:f_n \right)$ and if
$\li \left((\pi_1(X_n,x_n); (f_n)_* \right)$ is not trivial, then
$X$ cannot be embedded in a closed surface.\qed
\end{theorem}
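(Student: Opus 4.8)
The plan is to argue by contradiction, assembling the movability results of the previous subsection into a single chain. Suppose, contrary to the desired conclusion, that $X$ \emph{does} embed in a closed surface. Since $X$ is presented as the inverse limit of the finite polyhedra $X_n$, it is automatically compact; and in the situations of interest the $X_n$ are connected, so that $X=\lim_{\longleftarrow}\{X_n\}$, being an inverse limit of non-empty compact connected spaces, is itself connected. Thus the embedded copy of $X$ is a genuine \emph{subcontinuum} of a closed surface, and I may invoke Theorem \ref{sufmov}, which guarantees that $(X,x)$ is movable for every choice of base point $x\in X$ — in particular for the base point $*$ compatible with the given expansion.

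First I would feed this movability into Proposition \ref{Move}. The inverse sequence $\left((X_n,x_n);\:f_n\right)$ is by hypothesis a sequence of finite (hence compact) polyhedra whose inverse limit is homeomorphic to the movable pointed space $(X,*)$, so Proposition \ref{Move} applies directly. Specialising to $k=1$, I conclude that the induced inverse sequence of fundamental groups
$$\cdots\to\pi_1(X_n,x_n)\buildrel (f_n)_*\over\lra \pi_1(X_{n-1},x_{n-1})\to\cdots\to\pi_1(X_0,x_0)$$
satisfies the Mittag--Leffler condition. Finally I would apply Theorem \ref{MLlim}: an ML inverse sequence has trivial $\li$, so $\li\bigl(\pi_1(X_n,x_n);\,(f_n)_*\bigr)$ must be the trivial pointed set. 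This directly contradicts the standing hypothesis that this $\li$ is non-trivial, and therefore no embedding of $X$ into a closed surface can exist.

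Since the proof is essentially the composition of three cited implications — embeddability $\Rightarrow$ movability (Theorem \ref{sufmov}), movability $\Rightarrow$ ML of the $\pi_1$-tower (Proposition \ref{Move}), and ML $\Rightarrow$ triviality of $\li$ (Theorem \ref{MLlim}) — there is no single hard computational step. The one point genuinely requiring care is the application of Theorem \ref{sufmov}, which is stated only for subcontinua: I must verify that the embedded image of $X$ is compact \emph{and} connected rather than merely a compact subset. Compactness is immediate from the inverse-limit presentation, while connectedness follows from the standard fact that an inverse limit of non-empty compact connected spaces is connected. A secondary bookkeeping issue is the consistent handling of base points across the three invocations: movability in Proposition \ref{Move} is the pointed notion, and Theorem \ref{sufmov} supplies movability at an arbitrary point, so the base points $x_n$ and their limit $*$ align throughout. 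Once these routine points are checked, the contradiction closes the argument.
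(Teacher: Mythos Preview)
Your proof is correct and is exactly the argument the paper intends: the theorem is stated with a \qed\ because it is simply the composition of Theorem~\ref{sufmov}, Proposition~\ref{Move}, and Theorem~\ref{MLlim}, precisely as you have written. Your attention to the connectedness hypothesis needed to invoke Theorem~\ref{sufmov} is appropriate and is implicit in the paper's standing context of continua.
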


\section{Tiling spaces and attractors}
\subsection{The space of an aperiodic tiling}\label{tilings}
For our purposes here, a {\em tiling\/} $P$ of $\R^d$ is a decomposition of $\R^d$ into a union of compact, polyhedral regions, each translationally congruent to one of a finite number of fixed {\em prototiles\/} and meeting only on their boundary, full face to full face. In the case $d=1$, a tiling is essentially equivalent to a bi-infinite word in a finite alphabet indexed by $\Z$: such a word determines the tiling combinatorially, and it is determined geometrically with  the additional information of the lengths of the individual prototiles and the relative position of $0$ in the tiling. The {\em topological\/} information we will associate to $P$, in particular the homeomorphism class of the {\em tiling space\/} $\Omega_P$ associated to $P$ (see definition \ref{tiling space} below) will depend only on such combinatorial information. See \cite{Sadbook} for a full discussion of the basics of tiling theory.

The tilings we have in mind will typically satisfy two further important properties. Here and elsewhere, let us  write $B_r(x)$ for the open ball in $\R^d$ of radius $r$ and centre $x$.

\begin{definition}\begin{enumerate}
\item A tiling $P$ of $\R^d$ is said to be {\em aperiodic\/} if it has no non-trivial translational symmetries, i.e., if $P=P+x$ for some $x\in\R^d$, then $x=0$.
\item We say $P$ is {\em repetitive\/} if, for every $r>0$, there is a number $R>0$ such that for every $x,y\in\R^d$, the patch $B_R(x)\cap P$ contains a translation of the patch $B_r(y)\cap P$.
\end{enumerate}\end{definition}

One of the aims of this paper is to identify, for each of our attractors $A\subset M$, a tiling $P$ whose associated tiling space $\Omega_P$ is homeomorphic to $A$. We now formally introduce this space, also known in the literature as the {\em continuous hull\/} of $P$. First however, we must describe the {\em local topology\/} on a set of tilings.

\begin{definition}
Suppose $\mathbb{W}$ is a set of tilings in $\R^d$. The  {\em local topology\/} on a $\mathbb{W}$ is given by the basis of open sets defined by all the {\em cylinder sets}. For $W\in\mathbb{W}$ and parameters $r, s>0$, define the cylinder set
$$U(W,r,s)=\left\{V\in\mathbb{W}\,|\, B_r(0)\cap  V=  B_r(0)\cap (W+x)\quad
\mbox{for some $x\in\R^d$ with $|x|<s$}\right\}\,.$$
That is, $U(W,r,s)$ consists of those tilings which agree with $W+x$ out to distance $r$ from the origin, for some translate $x$ of length less than $s$.
\end{definition}

This topology is metrisable, and the reader will find many sources (e.g.\ \cite{Sadbook}) which defines it directly  in terms of a specific metric $\partial$. Loosely speaking, the metric $\partial$ declares two tilings to be close if, after a small translation, they agree out to a large distance from the origin.

\begin{definition}\label{tiling space}
The  {\em tiling space\/} of $P$ is the space $\Omega=\Omega_P$ of all tilings $S$ of $\R^d$ all of whose local patches $B_r(x)\cap S$ are translation images of patches occurring in $P$, and  topologised with the local topology.
\end{definition}

Assuming $P$ is repetitive, $\Omega_P$ may also be defined as the completion of $P+\R^d$, the set of all translates of $P$, with respect to the metric $\partial$.

We shall meet  in Section \ref{OneD} the particular examples of tilings generated by substitutions. For this class of examples we shall give a further (equivalent) definition of the corresponding tiling space.

For a repetitive, aperiodic tiling $P$, the space $\Omega_P$ is compact, connected and locally has the structure of a Cantor set crossed with a $d$-dimensional disc; in fact it can be shown that, up to homeomorphism, $\Omega_P$ has the structure of a Cantor fibre bundle over a $d$-torus \cite{SW}.

A host of results \cite{AP, BD, BDHS, Ga, Kal, S}, {\em etc.}, variously identify a tiling space as an inverse limit of finite, path connected complexes. These results are applicable to tilings varying from the very general to specific classes, but one motivation for many of them has been to decompose the tiling spaces in such a way as to make computation of cohomology and $K$-theory accessible: if
$$\Omega_P=\lim_{\longleftarrow} \{\cdots\to X_n\to X_{n-1}\to\cdots\to X_1\}$$
then, for example, the \v{C}ech cohomology is computed as $H^*(\Omega_P)=\underrightarrow{\lim}\, H^*(X_n)$.

% $A=\underleftarrow{\lim}(\bigvee^r S^1, s)$

Although results like these show that the formalism of shape theory is very natural to apply to the subject of tiling spaces, it has not explicitly been done as far as we are aware. Nevertheless, it is interesting to note that several of the crucial steps in the papers such as \cite{BD, BDHS, Kal} which are particularly effective at computing cohomology, use essentially a shape equivalence: the machines developed compute the cohomologies $H^*(\Omega_P)$  by actually computing the cohomology of a space that is shape equivalent, but not homeomorphic, to $\Omega_P$.

We are now in a position to introduce our $L$-invariant mentioned in the introduction.

\begin{definition} Suppose $P$ is a tiling of $\R^d$, and $\Omega_P$ is its associated tiling space. Define $L(\Omega_P)$ to be $\lim^1\pi_1(X_n)$ for any $\PP$-expansion
$$\Omega_P=\lim_{\longleftarrow} \{\cdots\to X_n\to X_{n-1}\to\cdots\to X_1\}$$
with path connected, pointed complexes $X_n$.
\end{definition}

Following the discussion in the previous section, the invariant $L(-)$ takes values in the category of {\em pointed sets}. By construction, $L(-)$ is a shape invariant, and hence an invariant of $\Omega_P$ up to homeomorphism. It is in fact the first of a series of such invariants, and although we do not use them here, we record

\begin{definition} Suppose $P$ is a tiling of $\R^d$, and $\Omega_P$ is its associated tiling space. For $i\in\N$, define $L_i(\Omega_P)$ to be $\lim^1\pi_i(X_n)$ for any $\PP$-expansion
$$\Omega_P=\lim_{\longleftarrow} \{\cdots\to X_n\to X_{n-1}\to\cdots\to X_1\}$$
with path connected, pointed complexes $X_n$. Then $L_i(\Omega_P)=L(\Omega_P)$ for $i=1$, while for higher $i$ it will take values in abelian groups.
\end{definition}

\begin{remark}\label{homologylim1}
By the work of the previous section, the $L$-invariant for 1 dimensional tilings provides an obstruction to the tiling space being movable, and hence to it being realised as a subspace of a surface. In fact, homology or cohomology frequently suffice to determine that a space is not movable since, for $X$ a finite, path connected CW complex, and as $H_1(X)$ is the abelianisation of $\pi_1(X)$, if the inverse system
$$\cdots\pi_1(X_n)\to\pi_1(X_{n-1})\to\cdots\to \pi_1(X_0)$$
is  ML, then the system
$$\cdots H_1(X_n)\to H_1(X_{n-1})\to\cdots\to H_1(X_0)$$
is also ML. Thus the non-vanishing of $\lim^1 H_1(X_n)$  implies the non-vanishing of $\lim^1 \pi_1(X_n)$; similarly, divisibility in $\displaystyle{\lim_{\longrightarrow}} \,H^1(X_n)$ will also imply that the $L$-invariant is non-zero.

For example, consider the dyadic solenoid $\mathcal{S}$ given by the inverse limit of circles $X_n=S^1$ with bonding maps the doubling map. This space can be seen to be not movable from the fact that $\lim^1 H_1(X_n)$ does not vanish (it is a copy of the 2-adic integers mod $\Z$), or equivalently from the fact that $H^1(\mathcal{S})=\displaystyle{\lim_{\longrightarrow}} \,H^1(X_n)=\Z[\frac{1}{2}]$. However, we will see in Section~\ref{subtil} examples of tiling spaces for which the finer $L$ invariant and the associated homotopy groups are necessary to detect lack of movability.
\end{remark}

\subsection{Expanding attractors in codimension 1}\label{EAcodim1}

Recall that, given a diffeomorphism $h$ of a $C^k$-manifold $M$, $k\geqslant1$,  an
\emph{attractor} $A$ is a closed invariant set that admits a closed
neighborhood $N$ such that

 \begin{enumerate}
   \item $h(N) \subset \text{Interior}(N)$,
   \item $A$ consists of non-wandering points of $h$ and
   \item $A = \bigcap_{n\in \N}h^n(N).$
 \end{enumerate}

We will consider the case that $A$ is a continuum of codimension one in $M$ (i.e., it has topological dimension one less than that of $M$) and that  $A$ is an expanding attractor. Then each point $x \in A$ has a stable manifold
$W^s(x)=\{y\in M \,|\, \mathrm{dist}(h^n(x),h^n(y))\rightarrow 0
\:\text{ as } n \rightarrow \infty \}$ homeomorphic to $\R$ and an
unstable manifold $W^u(x)=\{y\in M \,|\,
\mathrm{dist}(h^n(x),h^n(y))\rightarrow 0 \:\text{as } n \rightarrow
-\infty \}$ homeomorphic to $\R^{d}$, both of which are
submanifolds of $M.$ For any given $x,y \in A$ we have $W^s(x) \cap
W^u(y)\subset A $ and at each point in this intersection the
corresponding tangent spaces of the stable and unstable manifolds
split the tangent space of $M$ into a direct sum. In the expanding
case under consideration, for each point $x \in A,$  $W^u(x) \subset
A$ while $W^s(x)$ intersects $A$ in a totally disconnected set.
Given points $x,y \in A$ and fixed orientations on $W^s(x)$ and $
W^u(y),$ if for each point $z\in W^s(x) \cap W^u(y)$ there is a
neighborhood $U$ of $z$ that can be oriented in a such a way that
its orientation coincides with the orientations induced by $W^s(x)$
and  $ W^u(y)$ at all points in $W^s(x) \cap W^u(y)\cap U, $ then
the attractor $A$ is said to be \emph{orientable}; otherwise, $A$ is
\emph{unorientable}. In~\cite{P1,P2} Plykin proved fundamental
theorems about the structure of such attractors that are essential
for our results and are summarized in ~\cite{P3}. Many of these
results relate to the structure of the restriction of the $h$ to
$W^s(A)=\cup_{x\in A}W^s(x)$, the \emph{basin of attraction} of $A$.

\smallskip
\begin{theorem}~\cite[2.2]{P3}\label{Por} If the continuum $A$ is an orientable codimension $1$
expanding attractor of the diffeomorphism $h$ of a $C^{k \geq
1}$-manifold $M$ of dimension $d+1\geq 3$, then $W^s(A)$ is
homeomorphic to a $(d+1)$--dimensional torus $\mathbb{T}^{d+1}$ with some
finite number $k$ points removed. Moreover, $W^s(A)$ can be
compactified by adding $k$ points to form a space
$\overline{W^s(A)}$ that is homeomorphic to $\mathbb{T}^{d+1}$ in such a
way that $h$ can be extended to a diffeomorphism
$\overline{h}:\overline{W^s(A)} \rightarrow \overline{W^s(A)}$ that
is topologically conjugate to a $DA$-diffeomorphism of
$\mathbb{T}^{d+1}$.
\end{theorem}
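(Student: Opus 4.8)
The plan is to reverse-engineer the DA (``derived from Anosov'') construction that the theorem asserts $\overline{h}$ models. First I would record the ambient structure: since $A$ is an attractor, its basin $W^s(A)=\cup_{x\in A}W^s(x)$ is an open, connected, $h$-invariant $(d+1)$-submanifold of $M$, and the one-dimensional stable manifolds $\{W^s(x)\}$ assemble into a foliation $\mathcal{F}^s$ of $W^s(A)$ by lines, transverse to the $d$-dimensional unstable lamination $\cup_{x\in A}W^u(x)=A$. The orientability hypothesis is used precisely here: it lets me choose a coherent, $h$-compatible orientation of the leaves of $\mathcal{F}^s$, so that ``flowing in the stable direction'' is globally well defined. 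Williams' theorem \cite{W} supplies the local model --- each point of $A$ has a neighbourhood in $W^s(A)$ homeomorphic to $D^d\times\R$ meeting $A$ in $D^d\times C$ for a Cantor set $C\subset\R$ --- together with the global description $A=\varprojlim(K,f)$ for a compact branched $d$-manifold $K$ and an expanding map $f$.

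The technical heart is the analysis of the complementary set $W^s(A)\setminus A$ and of the ends of $W^s(A)$. Along each stable leaf the local model shows that $A$ meets the leaf in a Cantor set, so leafwise the complement is a countable union of open arcs: the ``gaps''. The dynamics contracts these gaps under $h$ and expands them under $h^{-1}$, and I would track a gap under backward iteration. The boundary leaves of a gap are stable manifolds of periodic points of $h$, and the key claim --- where codimension one and orientability are indispensable --- is that the backward orbit of each gap escapes to an end of $W^s(A)$ which is a single repelling periodic point, and that, because $K$ is a \emph{finite} complex, there are only finitely many such ends, say $k$ of them. (In higher codimension this fails outright: the solenoid's basin has an end modelled on a Cantor set of transverse directions rather than a point, which is exactly why the theorem is false there.) Adjoining these $k$ points produces $\overline{W^s(A)}$, and the content of this step is to verify that adding one point at each end is locally Euclidean, so that $\overline{W^s(A)}$ is a closed $(d+1)$-manifold and $h$ extends to a diffeomorphism $\overline{h}$ fixing the added points as sources.

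With a closed manifold in hand I would identify it. The added points are repelling periodic points of $\overline{h}$, and away from them $\overline{h}$ is expanding in the $d$-dimensional unstable direction and contracting in the one-dimensional stable direction; this is by construction the output of a DA surgery performed on some Anosov diffeomorphism. Reversing the surgery --- isotoping $\overline{h}$ in a neighbourhood of each source so as to restore hyperbolic behaviour there --- yields a diffeomorphism $g$ of $\overline{W^s(A)}$ that is Anosov with one-dimensional stable and $d$-dimensional unstable distributions, i.e.\ \emph{codimension one}. By the Franks--Newhouse classification of codimension-one Anosov diffeomorphisms, a closed manifold admitting such a map is homeomorphic to a torus and $g$ is topologically conjugate to a hyperbolic toral automorphism. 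Hence $\overline{W^s(A)}\cong\mathbb{T}^{d+1}$, so $W^s(A)\cong\mathbb{T}^{d+1}-\{k\}$, and, tracing the conjugacy back through the surgery, $\overline{h}$ is conjugate to a DA-diffeomorphism of $\mathbb{T}^{d+1}$, as required.

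The main obstacle is the middle step: showing that every end of the basin is a single tame point and that there are only finitely many, so that the compactification is genuinely a manifold rather than merely a compact space. This is a delicate point-set and dynamical argument that exploits the one-dimensionality of the stable direction (making the leafwise gaps \emph{arcs}, whose two endpoints can be followed to a common limiting orbit) together with the coherent orientation supplied by orientability. A secondary difficulty is controlling the hyperbolic splitting across the added points so that the surgery-reversal really produces an Anosov $g$ to which Franks--Newhouse applies; everything downstream of these two points is then a direct appeal to the classification of codimension-one Anosov systems.
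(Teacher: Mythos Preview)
This theorem is not proved in the paper: it is quoted verbatim as a result of Plykin, with the citation \cite[2.2]{P3}, and the paper relies on it as background (see the sentence preceding the statement: ``In~\cite{P1,P2} Plykin proved fundamental theorems about the structure of such attractors that are essential for our results and are summarized in~\cite{P3}''). So there is no proof in the paper for your proposal to be compared against.

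That said, the paper does give a short summary of Plykin's argument at the start of Section~5.1, and it is worth noting where your sketch aligns with and diverges from it. Plykin's route, as described there, is to build a pair of transverse $h$-invariant foliations $\mathcal{S}$ (one-dimensional) and $\mathcal{U}$ ($d$-dimensional) of the compactification $\overline{W^s(A)}$ and then appeal to the classification of closed manifolds supporting such a pair to conclude $\overline{W^s(A)}\cong\mathbb{T}^{d+1}$; the added points become fixed points of the extension $\overline{h}$, which is then seen to be a $DA$-diffeomorphism. Your outline instead proposes to compactify first, then \emph{undo} the DA surgery to produce a genuine codimension-one Anosov diffeomorphism, and finally invoke Franks--Newhouse. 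These are close cousins --- Franks--Newhouse is essentially the classification statement Plykin is using, phrased for the diffeomorphism rather than the foliation pair --- so your strategy is a legitimate reorganisation of the same ideas rather than a different proof.

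Your own assessment of the difficulties is accurate: the real work, which neither the paper nor your sketch carries out, lies in showing that the ends of $W^s(A)$ are finitely many and that the one-point-per-end compactification is a manifold, and in extending the hyperbolic structure smoothly across the added points. Those are precisely the steps Plykin's papers \cite{P1,P2} are devoted to.
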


Recall that a $DA$-diffeomorphism of $\mathbb{T}^{d+1}$ is obtained by modifying an Anosov automorphism $\mathbb{T}^{d+1} \to \mathbb{T}^{d+1};$ that is,  an automorphism of $\mathbb{T}^{d+1} = \R^{d+1}/\Z^{d+1}$ that lifts to an automorphism of $\R^{d+1}$ represented by a matrix in $\mathrm{GL}(d+1,\Z)$ having no eigenvalues of modulus one. The modification takes the form of inserting a source along each of a finite number of periodic orbits of the automorphism. These  maps were first introduced by Smale~\cite[9.4(d)]{Sm} and are explained in detail in, for example, \cite[Chapt 8.8]{R}, \cite[Chapt 4.4, Ex. 5]{PdM}.

A classic example of a $DA$-diffeomorphism is derived from the automorphism $A$ of $\mathbb{T}^2$ represented by the matrix $\left( \begin{array}{rr}
1&1\\
1&0
\end{array}\right) $ modified at the fixed point $\mathbf{0}$ by changing the automorphism in a small disk $V$ containing $\mathbf{0}$ in its interior and leaving the automorphism unchanged outside $V.$ The derived diffeomorphism $h$ has a source at  $\mathbf{0}$ and is isotopic to the original automorphism $A,$ as can be seen by isotopically deforming the disk $V$ to a point.  The diffeomorphism $h$ can be made $C^{\infty}$ and has a one-dimensional attractor that is locally homeomorphic to the product of an interval and the Cantor set.

\medskip \medskip

Plykin also obtained a corresponding result for unorientable attractors.

\begin{theorem}~\cite[2.2]{P3}\label{Punor} If the continuum $A$ is an unorientable codimension $1$
expanding attractor of the diffeomorphism $h$ of a $C^{k \geq
1}$-manifold $M$ of dimension $d+1\geq 3$, then there is a manifold
$\widetilde{W^s(A)}$ and a commutative diagram

$$\begin{array}{rcl}\label{orcover}
\widetilde{W^s(A)}&\buildrel\tilde{h}\over\lra&\widetilde{W^s(A)}
\\
\big\downarrow \pi & & \big\downarrow \pi
\\
W^s(A)&\buildrel h\over\lra& W^s(A)
\end{array}$$
where $\tilde{h}$ is a diffeomorphism with an orientable expanding
attractor $\widetilde{A}=\pi^{-1}(A)$ and $\pi$ is a two--to--one
covering map.

\end{theorem}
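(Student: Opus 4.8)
The plan is to realise $\widetilde{W^s(A)}$ as the orientation double cover associated to the stable foliation on the basin, and to lift $h$ to it. Throughout $W^s(A)$ the stable manifolds $W^s(x)\cong\R$ fit together into a one-dimensional foliation, whose tangent line field I denote $T^s$; this extends the contracting distribution $\mathbf{E}^s$ along $A$. The first step is to recognise that the orientability condition of Section \ref{EAcodim1} is exactly the co-orientability of this structure: transporting a chosen orientation of the stable direction around a loop $\gamma$ in $W^s(A)$ returns it preserved or reversed, and the resulting sign is a homomorphism $\omega\colon\pi_1(W^s(A))\to\Z/2$, equivalently a class $\omega=w_1(T^s)\in H^1(W^s(A);\Z/2)$. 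The definition of orientability amounts to the statement that $A$ is orientable precisely when $\omega=0$.

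Given that $A$ is unorientable, $\omega\neq0$, and I take $\pi\colon\widetilde{W^s(A)}\to W^s(A)$ to be the double cover corresponding to the index-two subgroup $\ker\omega$; concretely, its points are pairs $(p,o)$ with $p\in W^s(A)$ and $o$ an orientation of the stable line at $p$. Since $\omega\neq0$ this cover is connected, so $\pi$ is genuinely two-to-one, and by construction the lifted line field $\pi^*T^s$ is orientable, i.e.\ $\pi^*\omega=0$. Because $h$ carries stable leaves to stable leaves, $h(W^s(x))=W^s(h(x))$, it preserves $T^s$ and hence $h^*\omega=\omega$; standard covering-space theory then supplies a lift $\tilde h\colon\widetilde{W^s(A)}\to\widetilde{W^s(A)}$ with $\pi\tilde h=h\pi$ (there are two, interchanged by the deck involution, and either serves). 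As $\pi$ is a local diffeomorphism and $h$ is smooth, $\tilde h$ is a diffeomorphism, yielding the required commuting square.

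It remains to check that $\widetilde A:=\pi^{-1}(A)$ is an orientable, expanding, codimension-one attractor of $\tilde h$. Orientability is built in: the restriction of $\pi^*T^s$ to $\widetilde A$ is the contracting distribution of $\tilde h$ and it is now oriented, so the product orientations demanded in the definition can be chosen consistently. That $\widetilde A$ is an expanding attractor follows because $\pi$ is a covering map, hence a local diffeomorphism: the invariant splitting $\mathbf{E}^s+\mathbf{E}^u$ and its contraction and expansion rates pull back to a $\tilde h$-invariant hyperbolic splitting over $\widetilde A$ with the same behaviour, and the local $\R^d\times(\text{Cantor set})$ product charts lift. Finally, if $N$ realises the attractor conditions (1)--(3) for $A$, then $\pi^{-1}(N)$ does so for $\widetilde A$, since $\tilde h(\pi^{-1}(N))=\pi^{-1}(h(N))$ and $\widetilde A=\bigcap_{n}\tilde h^{n}(\pi^{-1}(N))$.

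I expect the main obstacle to be the naturality assertion $h^*\omega=\omega$ and, relatedly, the precise identification of the orientability condition with the vanishing of $w_1(T^s)$. Although $h$ manifestly permutes stable leaves, one must verify that the induced action on the monodromy class $\omega$ is trivial, so that $\omega$ is genuinely an invariant of the dynamically-defined foliation and not of auxiliary orientation choices; only then is the lift $\tilde h$ guaranteed. Once $\omega$ is shown to be well defined and $h$-invariant, the covering-space construction and the local-diffeomorphism transfer of the hyperbolic and attractor structures are routine.
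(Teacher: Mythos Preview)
The paper does not prove this theorem: it is quoted verbatim from Plykin~\cite[2.2]{P3} and used as a black box in the later arguments (see the proof of Theorem~\ref{or}). There is therefore no ``paper's own proof'' to compare against; what you have supplied is a sketch of how such a proof would go.

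Your approach---building the orientation double cover of the stable line field and lifting $h$ to it---is the standard construction and is essentially what Plykin does. Your worry about $h^*\omega=\omega$ is not really an obstacle: since $Dh$ carries the stable subbundle to itself, $h^*T^s\cong T^s$ as line bundles, and naturality of $w_1$ gives $h^*\omega=w_1(h^*T^s)=w_1(T^s)=\omega$ immediately. The lifting of $h$ and the transfer of the hyperbolic splitting through the local diffeomorphism $\pi$ are then routine, as you say.

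One point deserves a little more care. The paper's definition of orientability of $A$ (Section~\ref{EAcodim1}) is phrased in terms of simultaneously orienting $W^s(x)$ \emph{and} $W^u(y)$ consistently at intersection points, not merely in terms of orienting the stable line field. Your identification of this condition with the vanishing of $w_1(T^s)$ is correct in the present setting, but it uses implicitly that the basin $W^s(A)$ is itself an orientable manifold (so that an orientation of the $1$-dimensional $T^s$ determines one on its $d$-dimensional complement $T^u$); it would be worth making that step explicit, or alternatively working with the double cover associated to $w_1(T^s)+w_1(T^u)$ and checking that it coincides with yours.
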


\section{Attractors of dimension one}\label{OneD}
%\medskip
\subsection{The shape of a dimension 1, codimension 1 expanding attractor.}\label{shapeof1}

In this part we  prove the stability of 1 dimensional expanding attractors that embed in a surface, and in so doing prove Theorem \ref{thm1} in the case $d=1$.

Williams~\cite{W1,W2} showed that any one-dimensional expanding attractor is
homeomorphic to  the inverse limit space
$$A=\underleftarrow{\lim}\left(\bigvee^r S^1; s\right)\colon=
\underleftarrow{\lim}\left\{\cdots\to\bigvee^r S^1\buildrel s\over\longrightarrow
\bigvee^r S^1\buildrel s\over\longrightarrow
\bigvee^r S^1\to\cdots\to \bigvee^r S^1
%\buildrel s\over\longrightarrow\bigvee^r S^1
\right\}$$
for
an expansion $s\colon (\bigvee^r S^1,p)\to (\bigvee^r S^1,p)$ on the one point
union of $r$ copies of the circle $S^1$ that fixes the wedge point $p.$
Notice that this is true independent of whether or not the attractor is
orientable.

Note that $\pi_1(\bigvee^r S^1,p)$ is the free group $F^r$ on $r$ letters, and, up to homotopy, the map $s\colon (\bigvee^r S^1,p)\to (\bigvee^r S^1,p)$ is determined by the endomorphism $s_*$ in $\pi_1(-)$, that is by the endomorphism $s_*\colon F^r\to F^r$.

\begin{lemma}\label{Shape} Suppose $A=\underleftarrow{\lim}(\bigvee^r S^1; s)$ for some map $s\colon (\bigvee^r S^1,p)\to (\bigvee^r S^1,p)$. Write $s_*$ for the corresponding
endomorphism of $F^r=\pi_1(\bigvee^r S^1,p)$ and $\mathcal{G}$ for the
resulting inverse sequence of groups. Then $A$ is stable if and only
if $\lim^1\,\mathcal{G}=1$.
\end{lemma}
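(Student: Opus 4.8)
The plan is to route the whole equivalence through the Mittag--Leffler condition on the tower $\mathcal{G}$. On the one hand, if $\mathcal{G}$ is ML then $\lim^1\mathcal{G}=1$ by Theorem~\ref{MLlim}; on the other, since each group in $\mathcal{G}$ is the countable free group $F^r$, the vanishing of $\lim^1\mathcal{G}$ returns the ML condition by Geoghegan's Theorem~\ref{li1}. Hence $\lim^1\mathcal{G}=1$ if and only if $\mathcal{G}$ satisfies ML, and it suffices to show that $A$ is stable precisely when $\mathcal{G}$ is ML.

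The forward implication is the routine one. If $A$ is stable then it is movable by Theorem~\ref{stabmove}. Since $A$ is presented as the inverse limit of the compact, connected, one-dimensional polyhedra $\bigvee^r S^1$, Theorem~\ref{1dMove} (equivalently Proposition~\ref{Move}) shows that the induced tower of fundamental groups, which is exactly $\mathcal{G}$, satisfies the ML condition, whence $\lim^1\mathcal{G}=1$ by Theorem~\ref{MLlim}.

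For the converse I would first extract the algebraic consequence of ML. Because every group and every bonding map in $\mathcal{G}$ coincide, the ML condition says exactly that the descending chain of images $\im(s_*^k)$ stabilises, say $\im(s_*^k)=\im(s_*^{k_0})$ for all $k\geqslant k_0$; write $G=\bigcap_k\im(s_*^k)=\im(s_*^{k_0})$ for this eventual image. Now $\im(s_*)$ is generated by the $s_*$-images of the $r$ generators of $F^r$, so each $\im(s_*^k)$, and therefore $G$, is a finitely generated subgroup of a free group, hence itself free of finite rank $m\leqslant r$ by Nielsen--Schreier. Stabilisation gives $s_*(G)=\im(s_*^{k_0+1})=G$, so $s_*$ restricts to a surjection $G\twoheadrightarrow G$; as finitely generated free groups are Hopfian, this restriction is in fact an automorphism of $G$. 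Standard pro-category arguments then identify $\mathcal{G}$, via the inclusion $G\hookrightarrow F^r$ and the projection $s_*^{k_0}\colon F^r\to G$, with the pro-isomorphic tower $\cdots\to G\xrightarrow{s_*|_G}G\xrightarrow{s_*|_G}G$ whose bonding maps are all this automorphism.

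It remains to lift this algebraic pro-isomorphism to spaces, and here one-dimensionality is essential. Since finite wedges of circles are aspherical, the functor $\pi_1$ induces an equivalence between the pointed homotopy category of finite wedges of circles and the category of finitely generated free groups, and an equivalence of categories induces an equivalence of the associated pro-categories. Consequently the $\PP$-expansion $\{\bigvee^r S^1;\,s\}$ is pro-homotopy-equivalent to $\{Y;\,t\}$, where $Y=\bigvee^m S^1$ realises $G$ and $t\colon Y\to Y$ realises $s_*|_G$; being a $\pi_1$-isomorphism of aspherical complexes, $t$ is a homotopy equivalence. The tower $\{Y;\,t\}$ thus has all terms finite polyhedra and all bonding maps homotopy equivalences, so by the remark following Definition~\ref{stable} its limit, and hence $A$, is stable, being shape equivalent to $\bigvee^m S^1$. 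The main obstacle is exactly this converse: movability is strictly weaker than stability (Theorem~\ref{stabmove}), so ML cannot yield stability in general, and what rescues the argument is the confluence of three one-dimensional features — the eventual image is finitely generated, so the Hopfian property upgrades surjectivity to bijectivity; free groups are modelled by aspherical finite complexes, so algebraic maps lift uniquely up to pointed homotopy; and the limiting bonding map becomes a genuine homotopy equivalence. The delicate point is ensuring the pro-isomorphism of fundamental groups lifts \emph{coherently} through the equivalence of pro-categories rather than merely term by term.
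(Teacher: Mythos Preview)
Your proof is correct and follows essentially the same route as the paper's: both reduce the equivalence to the ML condition via Theorems~\ref{MLlim} and~\ref{li1}, use stability $\Rightarrow$ movability $\Rightarrow$ ML for the forward direction, and for the converse extract the stabilised image $G\leqslant F^r$, realise it as $\pi_1(\bigvee^m S^1)$, and show the induced self-map is a homotopy equivalence. Your argument is slightly more explicit in two places where the paper is terse --- invoking the Hopfian property to upgrade the surjection $s_*|_G$ to an automorphism, and packaging the topological realisation as an equivalence of (pro-)categories rather than constructing the maps $j$ and $w$ by hand --- but these are presentational refinements of the same strategy, not a different approach.
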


\noindent{\bf Proof.} In general, any stable space is movable
(Theorem \ref{stabmove}, and see also~\cite[II $\S$ 8.1, p. 200]{MS}) and thus, by Proposition \ref{MLlim}, if $A$ is stable,  $\mathcal{G}$ is ML
and so $\lim^1\,\mathcal{G}=1$.

We prove the converse. Assume that $\lim^1 \mathcal{G}=1$. From
Theorem \ref{li1} we know that $\im s_*^n$ is eventually constant,
say $\im s_*^n=H\leqslant F^r$ for all $n\geqslant N$. Then $H$ is
necessarily a free group of some rank, $m$ say, where $m\leqslant
r$. We realise the inclusion $H\to F^r$ topologically as a map
$$j\colon (\bigvee^mS^1,q)\lra (\bigvee^rS^1,p)$$
(i.e., we take the $m$ generators of $H\leqslant F^r=\pi_1(\bigvee^rS^1)$, and represent them
as loops in $\bigvee^rS^1$; then $j_*$ in $\pi_1(-)$ realises the inclusion $H\to F^r$). Consider the diagram of spaces
\begin{equation}\begin{array}{ccc}
\bigvee^rS^1&\buildrel s\over\lra & \bigvee^rS^1\\
\big\uparrow j&&\big\uparrow j\\
\bigvee^mS^1 && \bigvee^mS^1\,.
\end{array}\end{equation}
As $\im s^n_*=\im s^{n+1}_*$ for all $n\geqslant N$, we can complete
the diagram with a map $\bigvee^mS^1 \buildrel w\over\lra
\bigvee^mS^1$ making the square commute up to homotopy and in particular inducing an isomorphism
$w_*\colon\pi_1(\bigvee^mS^1,q)\to\pi_1(\bigvee^mS^1,q)$. By the Whitehead
theorem, $w$ is a homotopy equivalence. We then have that $X$ is
shape equivalent to the inverse limit of spaces $\bigvee^mS^1$ and
bonding maps the homotopy equivalences $w$. Thus $X$ is shape
equivalent to $\bigvee^mS^1$.\qed

\begin{theorem}\label{1Dstable}
Any codimension one expanding attractor $A$ of a diffeomorphism of a surface is
stable.
\end{theorem}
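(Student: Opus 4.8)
The plan is to chain together the results already established, reducing the stability of $A$ to the vanishing of a single $\lim^1$ term via Lemma \ref{Shape}, and then deducing that vanishing from the movability of $A$ guaranteed by its embedding in a surface. In outline: embedding gives movability, movability gives Mittag--Leffler, Mittag--Leffler gives $\lim^1\mathcal{G}=1$, and $\lim^1\mathcal{G}=1$ gives stability.

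I would begin with the geometric input. Since $M$ is a closed surface and $A\subset M$ is, by hypothesis, a continuum, Theorem \ref{sufmov} applies directly: for any choice of basepoint $x\in A$, the pointed space $(A,x)$ is movable. This is the only step that uses the assumption that $A$ lives inside a surface, and it is where the orientability question might in principle intrude. However, since we need only the movability conclusion and not the finer shape classification of Krasinkiewicz, the distinction between the orientable and unorientable cases is irrelevant here, both being covered by \cite{K,M}.

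Next I would invoke Williams' description, writing $A=\underleftarrow{\lim}(\bigvee^r S^1; s)$ as the inverse limit of a constant tower of one-point unions of $r$ circles with bonding map $s$ fixing the wedge point $p$. Each $\bigvee^r S^1$ is a finite connected one-dimensional polyhedron, so this is exactly the sort of inverse sequence to which Proposition \ref{Move} applies. Movability of $(A,x)$ then forces the inverse sequence of fundamental groups $\mathcal{G}=\{\pi_1(\bigvee^r S^1,p);\,s_*\}$ to satisfy the Mittag--Leffler condition. (Equivalently, one could appeal to the one-dimensional criterion of Theorem \ref{1dMove}, whose hypotheses are met since the $\bigvee^r S^1$ are finite connected one-dimensional polyhedra.)

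Finally, Theorem \ref{MLlim} converts the Mittag--Leffler condition into the statement $\lim^1\mathcal{G}=1$, and Lemma \ref{Shape}, whose hypotheses are now all in place, translates this vanishing back into the conclusion that $A$ is stable. No single step here is genuinely deep; the main obstacle is simply in verifying that the hypotheses align at each junction --- in particular that $A$ is a bona fide subcontinuum of a \emph{closed} surface, so that Theorem \ref{sufmov} applies, and that the tower furnished by Williams consists of precisely the finite one-dimensional polyhedra demanded by Proposition \ref{Move} --- after which the result follows formally.
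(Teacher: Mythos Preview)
Your proof is correct and follows essentially the same route as the paper. The only cosmetic difference is that the paper packages the chain ``subcontinuum of a surface $\Rightarrow$ movable $\Rightarrow$ ML $\Rightarrow$ $\lim^1\mathcal{G}=1$'' into the single contrapositive statement of Theorem~\ref{ShapeSurf}, whereas you unpack these implications explicitly via Theorem~\ref{sufmov}, Proposition~\ref{Move} (or Theorem~\ref{1dMove}), and Theorem~\ref{MLlim}; both then conclude via Lemma~\ref{Shape}.
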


\begin{proof}
By Williams' characterisation of one-dimensional attractors and the
above lemma, the stability of a one-dimensional expanding
attractor $A=\underleftarrow{\lim}(\bigvee^r S^1; s)$  is equivalent to the vanishing
of $\li$ for any associated inverse sequence of fundamental groups. However, this $\li$ must vanish by Theorem~\ref{ShapeSurf} since any subcontinuum of a surface is movable.
\end{proof}

\begin{remark}
Consider an attractor $A=\underleftarrow{\lim}(\bigvee^r S^1; s)$ with $r=1$. Due to the expansive nature of $s$, it will not induce an isomorphism on homology of $S^1$ and so the resulting attractor $A$ is not stable. Thus, any   expanding attractor of a diffeomorphism of a surface is shape equivalent to $\bigvee^rS^1$ for some $r>1$, which in turn is homotopy equivalent to a 2-torus $\TT^2$ with $r-1$ points removed. This proves Theorem \ref{thm1} for $d=1$. Note that this result does not require that the surface be orientable.
\end{remark}

\medskip

\subsection{Realising limit spaces as attractors.}\label{Realising}% $\underleftarrow{\lim}(\bigvee^r S^1; s)$ as an attractor.}

We turn now to examine  conditions under which a space presented as a limit $\underleftarrow{\lim}(\bigvee^r S^1; s)$ can be realised as an expanding attractor for some diffeomorphism $h$ on a surface $M$. The question has two parts. From Theorem \ref{1Dstable}, a necessary condition is that $\underleftarrow{\lim}(\bigvee^r S^1; s)$ is stable, and we begin by considering conditions on the map $s$ that allow us to know when this is true, which, by Lemma \ref{Shape}, means conditions that tell us when the corresponding $\li (\im(s_*^n))$ vanishes. The second part, the construction of $M$ and $h$ when we know that $\underleftarrow{\lim}(\bigvee^r S^1; s)$ is stable, is addressed in part in the Remark \ref{BHremark} below, but in general this is a very difficult issue.

As the fundamental group of $\bigvee^r S^1$ is a free group $F^r$ on $r$ generators, we analyse the stability of $\underleftarrow{\lim}(\bigvee^r S^1; s)$ via the endomorphism $s_*\colon F^r\to F^r$. It is useful to consider also the abelianisation of this endomorphism, i.e., the corresponding endomorphism $s_*^{ab}$ and the commutative diagram
\begin{equation}\label{abel}\begin{array}{rcl}
F^r&\buildrel s_*\over\lra&F^r\\
\big\downarrow\pi&&\big\downarrow\pi\\
\Z^r&\buildrel s_*^{ab}\over\lra&\Z^r
\end{array}\end{equation}
where the vertical arrows $\pi$ are both abelianisation.

For convenience, given an inverse system of groups and endomorphisms
$$\cdots G\buildrel f\over\longrightarrow G\buildrel f\over\longrightarrow \cdots \longrightarrow G\buildrel f\over\longrightarrow G$$
we write for short $\li (f)^n$ for the corresponding $\li$ term.

We note the following simple but useful  condition, which is essentially a restatement of the observation in Remark \ref{homologylim1}.

\begin{lemma}\label{Hcond}
A necessary condition for $\li (s_*)^n$ vanishing is that $\li (s_*^{ab})^n$ vanishes. In particular, $\li (s_*)^n$ will not vanish unless $s_*^{ab}$ is projection onto a summand of $\Z^r$.\qed
\end{lemma}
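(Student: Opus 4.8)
The plan is to treat the two assertions separately. The first --- that vanishing of $\li(s_*)^n$ forces vanishing of $\li(s_*^{ab})^n$ --- is the substantive part, and the cleanest route is through the six term exact sequence \eqref{6term}. The second (``in particular\,\dots'') is then a purely linear-algebraic reading of what it means for $\li(s_*^{ab})^n$ to vanish for an endomorphism of $\Z^r$.

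For the first assertion, observe that any endomorphism of $F^r$ carries commutators to commutators, so $s_*$ restricts to an endomorphism of the commutator subgroup $[F^r,F^r]$, and the abelianisation square \eqref{abel} extends to a short exact sequence of (constant) inverse systems
$$1\to\big([F^r,F^r],\,s_*\big)\to\big(F^r,\,s_*\big)\buildrel\pi\over\lra\big(\Z^r,\,s_*^{ab}\big)\to1\,.$$
Feeding this into \eqref{6term} yields exactness of $\;\cdots\to\li(s_*)^n\to\li(s_*^{ab})^n\to1$, so the induced map $\li(s_*)^n\to\li(s_*^{ab})^n$ is onto. A surjection of pointed sets out of a one-point set has one-point image, so if $\li(s_*)^n$ is trivial then so is $\li(s_*^{ab})^n$. (As a cross-check, the same follows from Theorems \ref{li1} and \ref{MLlim}: triviality of $\li(s_*)^n$ makes the $F^r$-tower Mittag-Leffler, and since $\pi$ is onto one has $\im((s_*^{ab})^k)=\pi(\im(s_*^k))$, so the $\Z^r$-tower is Mittag-Leffler too.)

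For the second assertion I would first reduce the vanishing of $\li(s_*^{ab})^n$ to a statement about the integer matrix $M:=s_*^{ab}$. Since $\Z^r$ is countable and abelian, Theorems \ref{li1} and \ref{MLlim} give that $\li(M)^n$ vanishes if and only if the tower is Mittag-Leffler, which for a constant tower means exactly that the descending chain $\Z^r\supseteq M\Z^r\supseteq M^2\Z^r\supseteq\cdots$ stabilises, say at $H=M^N\Z^r=M^{N+1}\Z^r$. The key computation is that this stabilisation splits the lattice: because $M$ sends $H$ onto $H$, the restriction $M|_H$ is a surjective endomorphism of the finitely generated group $H$, hence (Hopficity) an automorphism; so for each $x\in\Z^r$ one may pick $h\in H$ with $M^N h=M^N x$, whence $x-h\in\ker M^N$, while $H\cap\ker M^N=0$ since $M^N|_H$ is injective. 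As $M$ preserves both $H$ and $\ker M^N$, this gives $\Z^r=H\oplus\ker M^N$ with $M$ block-diagonal, acting as an automorphism on the summand $H$ and nilpotently on $\ker M^N$. This splitting is the sense in which $\li(s_*^{ab})^n=0$ forces $s_*^{ab}$ to be a projection onto a summand of $\Z^r$; conversely, if no such splitting exists the image chain cannot stabilise, so $\li(s_*^{ab})^n$, and hence $\li(s_*)^n$, is non-trivial.

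The main obstacle is precisely this integral splitting: eventual constancy of the images is a statement over $\Z$, and one must be sure that $\Z^r=H\oplus\ker M^N$ is a genuine direct sum of lattices (equivalently that $H$ is a pure subgroup) rather than merely the rational Fitting decomposition of $\Q^r$. The simultaneous surjectivity and injectivity of $M^N|_H$ is exactly what pins the complement down to $\ker M^N$ and makes the sum direct; everything else in the argument is formal.
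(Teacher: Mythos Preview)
Your proof is correct. The paper itself gives no argument beyond the pointer to Remark~\ref{homologylim1}, whose content is precisely your ``cross-check'': vanishing of $\li(s_*)^n$ gives Mittag--Leffler for the $F^r$-tower (Theorem~\ref{li1}), surjectivity of abelianisation pushes ML down to the $\Z^r$-tower, and Theorem~\ref{MLlim} concludes. Your primary route through the six-term sequence \eqref{6term} is a genuinely different and slightly cleaner argument, since it bypasses the ML detour and countability hypothesis entirely and reads off surjectivity of $\li(s_*)^n\to\li(s_*^{ab})^n$ directly. For the ``in particular'' clause the paper offers nothing at all; your integral Fitting-type splitting $\Z^r=H\oplus\ker M^N$ is a real addition and gives precise content to the paper's somewhat loose phrase ``projection onto a summand'' (namely: the eventual image $H$ is a direct summand, with $M$ acting as an automorphism there and nilpotently on the complement).
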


\begin{remark}\label{simplify}
The image of $s_*$ is a free group on $t$ letters, where $t\leqslant r$. Without loss of generality, we shall assume that $\im s_*$ is of full rank, i.e., $t=r$, since if this is not so, then the rank of $\im s_*^n$ will eventually stabilise, say $\im s_*^n\cong F^k$ for large $n$, and instead of diagram \ref{abel} we can consider  the commutative diagram
\begin{equation}\label{abel}\begin{array}{rcl}
F^k&\buildrel s_*|_{{\rm Im} s_*^n}\over\lra&F^k\\
\big\downarrow\pi&&\big\downarrow\pi\\
\Z^k&\buildrel s|_*^{ab}\over\lra&\Z^k
\end{array}\end{equation}
where the rank of the top map, $s_*|_{{\rm Im} s_*^n}$ is of full rank (now $k$). As the towers
$$\cdots \to F^r\buildrel s_*\over\longrightarrow F^r\to\cdots\to F^r\qquad\mbox{and}\qquad
\cdots \to F^k\buildrel s_*|_{{\rm Im} s_*^n}\over\longrightarrow F^k\to\cdots\to F^k$$
are equivalent in the pro-category, the $\li$ term of one vanishes if and only if the $\li$ term of the other does.
\end{remark}

\begin{prop}\label{lim1cond}
Suppose $\im s_*$ is free of rank $r$.
\begin{enumerate}
\item Then $\li (s_*)^n$ vanishes if and only if $s_*$ is an isomorphism.
\item If $s_*^{ab}$ is not an isomorphism then $\li (s_*)^n$ does not vanish.
\end{enumerate}
\end{prop}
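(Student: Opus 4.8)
The plan is to prove (1) in full and then obtain (2) as a formal consequence. The easy direction of (1) is immediate: if $s_*$ is an isomorphism then every bonding map in the tower is surjective, so the system is ML and $\li (s_*)^n$ vanishes by Theorem \ref{MLlim}. For the converse I would first exploit the standing hypothesis to force \emph{injectivity} of $s_*$. The corestriction of $s_*$ is a surjection $F^r\twoheadrightarrow\im s_*$, and since $\im s_*\cong F^r$ is free of rank $r$, composing with a chosen isomorphism $\im s_*\cong F^r$ realises $s_*$ as a surjective endomorphism of $F^r$. Free groups of finite rank are Hopfian, so this endomorphism is an automorphism, whence $s_*$ itself is injective. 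Consequently each iterate $s_*^k\colon F^r\to\im s_*^k$ is an isomorphism and each $\im s_*^k$ is free of rank $r$.

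The key step is then an index computation. Because $s_*^k\colon F^r\to\im s_*^k$ is an isomorphism carrying the subgroup $\im s_*\leqslant F^r$ onto $s_*^k(\im s_*)=\im s_*^{k+1}$, it preserves index, giving
$$[F^r:\im s_*]=[\im s_*^k:\im s_*^{k+1}]\qquad\mbox{for every }k\geqslant0.$$
Thus all consecutive indices in the descending chain $F^r\supseteq\im s_*\supseteq\im s_*^2\supseteq\cdots$ coincide, with common value $j:=[F^r:\im s_*]$ (possibly infinite). Now suppose $\li (s_*)^n$ vanishes. As each $F^r$ is countable, Theorem \ref{li1} gives that the tower is ML, which for this constant tower means precisely that the images $\im s_*^k$ are eventually constant; in particular $\im s_*^N=\im s_*^{N+1}$ for some $N$, forcing $j=[\im s_*^N:\im s_*^{N+1}]=1$. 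Hence $\im s_*=F^r$, so $s_*$ is surjective, and being injective it is an isomorphism. This proves (1).

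Part (2) then follows at once: abelianisation is functorial, so if $s_*$ were an isomorphism so would be $s_*^{ab}$; contrapositively, if $s_*^{ab}$ is not an isomorphism then neither is $s_*$, and (1) gives that $\li (s_*)^n$ does not vanish. I would note that (2) cannot be obtained from Lemma \ref{Hcond} alone, since when $\det s_*^{ab}=0$ the abelianised tower may still be ML (for instance when $s_*^{ab}$ is idempotent of rank $<r$, as can genuinely occur with $\im s_*$ of full rank); the non-vanishing is then detected only at the level of $\pi_1$, which is exactly what passing through (1) achieves.

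I expect the main obstacle to be the passage from ``the images $\im s_*^k$ stabilise'' to ``$s_*$ is surjective'': the Hopfian property secures injectivity but says nothing about surjectivity, and it is the index-preservation argument above — turning stabilisation into the statement that a single constant index equals $1$ — that closes this gap. The hypothesis that $\im s_*$ has full rank $r$ (justified in general by Remark \ref{simplify}) is precisely what makes $s_*$ injective and hence makes this index bookkeeping available.
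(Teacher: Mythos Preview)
Your proof is correct and follows the paper's overall strategy for (1)---Hopfian gives injectivity, and then non-surjectivity forces the images to strictly decrease so the tower is not ML---but the execution differs in two places worth noting. For the converse in (1) the paper argues directly: if $x\notin\im s_*$ then injectivity of $s_*$ shows $s_*^{n-1}(x)\in\im s_*^{n-1}\setminus\im s_*^n$ for every $n$, so the chain of images is strictly decreasing; your index-preservation argument packages the same content more structurally (and arguably more cleanly, since it makes explicit that the constant index $j=[F^r:\im s_*]$ must collapse to $1$). For (2) the paper splits into cases on the rank of $\im s_*^{ab}$, invoking Lemma~\ref{Hcond} when that rank is $r$ and reducing to (1) via the commutative square~(\ref{abel}) when it is less than $r$; your route---abelianisation is functorial, so $s_*$ iso $\Rightarrow$ $s_*^{ab}$ iso, then contrapose and apply (1)---is shorter and avoids the case split entirely. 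Your closing remark that Lemma~\ref{Hcond} alone cannot handle the rank-deficient case (since an idempotent $s_*^{ab}$ of rank $<r$ gives an ML abelianised tower) is a nice observation and explains why the paper, too, must fall back on (1) there.
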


\begin{proof}
First note that $s_*$ is injective: as $\im s_*$ is free of rank $r$, we may regard $s$ as an epimorphism from $F^r$ onto a group isomorphic to $F^r$ (namely $\im s_*$). The Hopfian property of $F^r$ then tells us that $s$ is injective.

For (1), if $s_*$ is an isomorphism, then clearly the tower
\begin{equation}\label{FTower}\cdots \to F^r\buildrel s_*\over\longrightarrow F^r\to\cdots\to F^r
\end{equation}
is ML and $\li (s_*)^n=1$.

Conversely, if $s_*$ is not an isomorphism, then as it is injective, it must fail to be onto. Suppose $x\in F^r$ is not in the image of $s_*$. Then for each $n$, the element $s_*^{n-1}(x)$ in the image of $s_*^{n-1}$ is not in $\im s_*^n$, for if $s_*^n(y)=s_*^{n-1}(x)$ for some $y\in F^r$, by the injectivity of $s_*$, we have $s_*(y)=x$, contradicting the assumption on $x$. The sequence of sets $\{\im s_*^n\}$ is thus strictly decreasing with $n$ and tower (\ref{FTower}) is not ML. Hence $\li (s_*)^n\not=1$.

For (2), the case where $\im s_*^{ab}$ is of rank $r$ but $ s_*^{ab}$ is not an isomorphism is dealt with by Lemma \ref{Hcond}.

If $\im s_*^{ab}$ is of rank less than $r$, then by the commutativity of Diagram (\ref{abel}), the composite $\pi\circ s_*$ cannot be onto, and hence $s_*$ is not onto. The result now follows by the argument used in part (1).
\end{proof}

\medskip It is certainly not the case that an endomorphism $s_*\colon F^r\to F^r$  need be invertible for the corresponding inverse limit space to be stable, and the constructions of the Remark \ref{simplify} can be highly relevant.  The following example illustrates this point.

\begin{example}\label{silly3}
The endomorphism $s_*$ on $F^3$ with generators $a,b,c,$ given by
$$a \mapsto abc,\quad b\mapsto abc,\quad c\mapsto a$$
is not an isomorphism, but $\li (s_*)^n$ is trivial. This follows from the observation that the image of any power of $s_*$ is the free group $F^2$ generated by the two words $\alpha=a, \,\beta=abc$, and $s_*$ on $\im s_*$ acts as
$$\alpha\mapsto\beta \quad \beta\mapsto \beta\beta\alpha$$
which is invertible (as is its abelianisation). Thus the inverse system of groups is ML by part (1) of the Proposition.
\end{example}

The following example illustrates part (2) of the Proposition.

\begin{example}
Suppose $s_*$ is the endomorphism  on $F^2$ with generators $a,b$ given by
$$a \mapsto ababa,\quad b\mapsto baaab\,.$$
Then $s_*$ is of rank 2, but its abelianisation, given by the matrix
$\left(\begin{array}{cc}3&2\\3&2\end{array}\right)$,
 is of rank 1. It may also be readily checked that this $s_*$ is not invertible, and hence $\li (s_*)^n\not=1$.
\end{example}

\begin{remark}
Proposition \ref{lim1cond} reduces the question of the stability of $\underleftarrow{\lim}(\bigvee^r S^1; s)$ to questions about the ranks of $s_*$ and its abelianisation and a question about the invertibility of $s_*$; the latter being addressable by methods such as Stallings' folding technique. While in practice these criteria may or may not be easily addressed, our second major question, that of realising $\underleftarrow{\lim}(\bigvee^r S^1; s)$ as an attractor supposing we have established its stability, is a good deal harder.
\end{remark}

\begin{remark}\label{BHremark}
In general, the stability of a space of the form $\underleftarrow{\lim}(\bigvee^r S^1; s)$ alone is not sufficient to guarantee that it occurs as an attractor of a surface diffeomorphism. Given such a space which is stable, it remains to {\em geometrically realise\/} the map $s\colon \bigvee^r S^1\to \bigvee^r S^1$. Effectively, this means realising $\bigvee^r S^1$ as a subspace of a surface $M$, thickening it to a 2 dimensional neighbourhood $\bigvee^r S^1\subset N\subset M$ of the same homotopy type as $\bigvee^r S^1$ in such a way that $s_*\colon\pi_1(\bigvee^r S^1,p)\to\pi_1(\bigvee^r S^1)$ can be realised as the homomorphism $h'_*\colon\pi_1( N)\to\pi_1( N)$ of some (differentiable) embedding $h'\colon N\hookrightarrow N$ which also allows an extension to a diffeomorphism $h\colon M\to M$ of the whole surface. The space $\underleftarrow{\lim}(\bigvee^r S^1; s)$ is then homeomorphic to the attractor $\bigcap_{n\in\N} h^n(N)\subset M$. It is known that many autmorphisms are not geometrically realisable and in~\cite{Ge} it is even shown that in some sense most automorphisms $s_*$ of $F^r$ when $r>2$ are not geometrically realisable.

In~\cite[Theorem 4.1]{BH} Bestvina and Handel derive sufficient conditions in terms of a cyclic word  for an automorphism $\alpha$ of $F^r$ to be realisable in the above sense to a pseudo-Anosov automorphism of a surface with one boundary component. Given any such pseudo-Anosov automorphism, one can construct a derived from pseudo-Anosov automorphism of the associated closed surface (with no boundary) that has a one-dimensional attractor of the form $A=\underleftarrow{\lim}(\bigvee^r S^1; s)$ in a way that parallels the $DA$  automorphisms of the torus discussed in section \ref{EAcodim1}, where the mapping $s$ induces an automorphism of $\pi_1(\bigvee^r S^1, p)$ conjugate to $\alpha$.

The condition of~\cite[Theorem 4.1]{BH} does not apply to attractors in a closed surface with multiple components in its complement (which correspond to modifying the automorphism on more than one periodic orbit), and  finding general necessary and sufficient conditions seems quite difficult and will not be addressed here. The problem is made more complicated by the fact that the fundamental group itself does not uniquely determine surfaces with boundary, and some information about the boundary components must also be reflected in any sufficient conditions.
\end{remark}

\medskip
\subsection{One-dimensional orientable attractors and substitution tiling spaces}\label{subtil}

We turn to the issue of realising the {\em orientable\/} one dimensional attractors as tiling spaces, proving Theorem  \ref{thm2} for $d=1$. In contrast to the situation when $d>1$ that we will meet in the Section \ref{Higherd}, we can realise the one dimensional attractors as spaces of {\em primitive substitution\/} tilings, which we now introduce.

\begin{definition}
A one dimensional  \emph{substitution} is a function $\sigma$ from a finite alphabet $\A$ of at least two letters to the set $\A^*$ of non-empty, finite words composed of letters in $\A.$ Such a substitution is called \emph{primitive} if, given any pair $a,b$ of letters in $\A$, there is an $n$ such that the letter $b$ occurs in the word $\sigma^n(a).$
\end{definition}

Giving the set  $\A$ the discrete topology, the $\Z$--fold product $\A^{\Z}$ (with the product topology) is a Cantor set which supports the {\em shift homeomorphism\/} $S\colon \A^{\Z}\to \A^{\Z}$ that shifts the index of points in $\A^{\Z}$ by one: $S((x_i)) = (y_i),$ where $y_i=x_{i+1}.$

\begin{definition}
Given a substitution $\sigma$ on $\A,$ the \emph{substitution subshift $\Sigma$} associated to $\sigma$ is the subspace of all points $(x_i) \in \A^{\Z}$ satisfying the property that for all $i\in \Z$ and all $k\in \N,$ the word $x_ix_{i+1}\cdots x_{i+k}$ is a subword of $\sigma^n(a)$ for some $n \in \N$ and some $a \in \A.$ The \emph{substitution tiling space} $\Omega_{\sigma}$ is the suspension of the shift homeomorphism $S$ restricted to $\Sigma$, i.e., the space $\Sigma\times\R/\sim$ where $\sim$ denotes the equivalence relation which identifies $((x_i) , t)$ with $(S((x_i)),t+1)$ for all $(x_i)\in\Sigma$ and $t\in\R$.
\end{definition}

It may be shown that for a primitive substitution $\sigma$, the space $\Omega_\sigma$ coincides with the tiling space $\Omega$ of Section \ref{tilings} associated to any of the elements $(x_i)$ of $\Sigma$.

In ~\cite{BD1} Barge and Diamond show that any orientable one-dimensional expanding attractor is homeomorphic to either a solenoid or a substitution tiling space. We sketch a proof. Consider a one-dimensional attractor $A=\underleftarrow{\lim}(\bigvee^r S^1, s)$ as before satisfying the conditions of an elementary presentation in the sense of Williams \cite{W2}.  By the orientability of $A,$  we can cover $A$ by consistently oriented flow box neighborhoods. Choose such a covering.

Now choose a term $X_n=\bigvee^r S^1$ in the inverse sequence defining $A$ such that the pullbacks in $A$ under the projection $p_n:A \lra X_n$ of sufficiently small arcs in $X_n$ are each contained in a flow box neighborhood. This allows us to orient each circle in $X_n$ (and  in fact, in all $X_m$ for $m>n$) consistently with the orientation of $A.$  Now construct an alphabet $\A=\left\lbrace a_1,\dots,a_r \right\rbrace $ whose letters correspond to each oriented circle in $\bigvee^r S^1$ and  define a function  $\sigma$ from $\A$ into the set of non-empty finite words induced by $s:X_{n+1} \lra X_n$; each circle in $X_{n+1}$ is mapped to a finite, ordered sequence of circles in $X_n$. Moreover, $\sigma$ has as values non-empty words with only positive powers.

To elaborate, the map $s:X_{n+1} \lra X_n$ determines how the small neighborhoods given by the pullbacks of small neighborhoods determined by arcs in $X_{n+1}$ fit within the flow box neighborhoods determined by the pullbacks of arcs in $X_n,$ and the consistent orientation of $A$ then implies that $s$ must preserve the given orientation of the circles.

Then $\sigma$ is a substitution when $r>1$ and $A$ is a solenoid if $r=1.$ By Williams' construction  we may assume that  $s$ satisfies the flattening condition that some neighborhood of the wedge point $p$ is mapped by some power of $s$ to a set homeomorphic to an interval. This implies that some power of the substitution $\sigma$ is \emph{proper} in the terminology of~\cite{BD} and so  forces the border in the sense of \cite{AP}. By the machinery of \cite{AP},  $\Omega_{\sigma}$ is therefore homeomorphic to $A.$

\medskip\noindent{\bf Proof of Theorem \ref{thm2} for $d=1$.}
Suppose $A$ is an orientable codimension 1 attractor. By the argument of  Barge and Diamond sketched above, we can identify $A$ with a space $\Omega_\sigma$ which is either a tiling space or a solenoid. The latter we can rule out since by Theorem~\ref{ShapeSurf}, we know that $L(\Omega_\sigma)$ must vanish, which is not the case for a solenoid, as in the example of Remark \ref{homologylim1}.\hfill$\square$

\begin{example}\label{FibEx}
A simple but usefully explicit example is given by the $DA$-diffeomorphism of the torus mentioned in Section \ref{EAcodim1}, which is derived from the automorphism represented by the matrix $\left( \begin{array}{rr}
1&1\\
1&0
\end{array}\right) $ and  has an attractor that is homeomorphic to the tiling space of the Fibonacci substitution
$$a \mapsto ab,\quad b\mapsto a.$$
\end{example}

\medskip\subsection{Embedding one dimensional substitution tiling spaces in surfaces.}\label{sect44}
Given the realisation in the result above of each orientable one dimensional, codimension one attractor as the tiling space of a primitive substitution, we turn to the converse question of which aperiodic, primitive substitutions $\sigma$ have a tiling space $\Omega_{\sigma}$ that can occur as an expanding attractor of a surface diffeomorphism: how close is the correspondence between these two sets of objects?

Holton and Martensen show in~\cite{HM} that whenever $\Omega_{\sigma}$ can be embedded in a closed orientable surface, it can occur as an attractor of a surface diffeomorphism, so our question addresses also the apparently more general issue of when we can identify a one dimensional tiling space as a subspace of an orientable surface.

In \cite{HM}  a necessary condition for $\Omega_\sigma$ to be embedded in such a surface is given, the condition requiring that the {\em asymptotic composants\/} \cite{BD1} of $\Omega_{\sigma}$
must form $n$-cycles for an even integer $n$, and moreover that the sum of indices of the cycles in an essential embedding is equal to the Euler characteristic of the ambient surface.

\medskip
Theorem \ref{1Dstable} gives a rather different necessary condition on the realisation of a tiling space as an attractor of a surface diffeomorphism.

\begin{cor}
Given a non-periodic tiling of $\R$ with tiling space $\Omega$, a necessary condition for $\Omega$ to be realisable as an attractor of a surface diffeomorphism is that $L(\Omega)=1$.\qed
\end{cor}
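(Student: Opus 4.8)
The plan is to recognise that this corollary is essentially immediate from Theorem \ref{1Dstable} (equivalently, from Theorem \ref{ShapeSurf}), the only point requiring care being the identification of the shape invariant $L(\Omega)$ with a $\li$ computed from a genuine inverse-limit presentation of $\Omega$ by finite complexes. First I would observe that, since $P$ is a tiling of $\R$, the space $\Omega$ is a one-dimensional continuum; any realisation of $\Omega$ as an attractor of a diffeomorphism $h\colon M\to M$ of a surface $M$ therefore presents it as a subcontinuum of the closed $2$-manifold $M$ of topological dimension one less than $M$, that is, as a codimension one expanding attractor.

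Next I would invoke Theorem \ref{1Dstable}, which tells us that such an attractor is stable, and then Theorem \ref{stabmove}, which gives that stability implies movability. I would then choose any presentation $\Omega=\underleftarrow{\lim}\{\cdots\to X_n\to X_{n-1}\to\cdots\}$ as an inverse limit of finite, path-connected pointed polyhedra; such a presentation exists for every tiling space by the results \cite{AP,BD,BDHS,Ga,S} recalled in Section \ref{tilings}. Proposition \ref{Move} then shows that the induced inverse sequence $\{\pi_1(X_n,x_n);(f_n)_*\}$ satisfies the Mittag-Leffler condition, and Theorem \ref{MLlim} yields $\li\{\pi_1(X_n)\}=1$. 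Because this presentation is a genuine $\PP$-expansion of $\Omega$ (any representation of a continuum as an inverse limit of finite CW complexes is a shape expansion) and $L(-)$ is a shape invariant, this $\li$ computes $L(\Omega)$, whence $L(\Omega)=1$. Equivalently, one could argue by contraposition directly through Theorem \ref{ShapeSurf}: were $L(\Omega)$ nontrivial, $\Omega$ could not be embedded in any closed surface, and in particular could not arise as the attractor of a surface diffeomorphism.

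There is no genuine obstacle here, since the substance of the statement is carried entirely by the stability Theorem \ref{1Dstable}; the only subtlety worth flagging is the independence of $\li\{\pi_1(X_n)\}$ from the chosen inverse-limit presentation. This is exactly what guarantees that the quantity appearing in Theorems \ref{ShapeSurf} and \ref{MLlim} is the well-defined shape invariant $L(\Omega)$ of the Definition, rather than an artefact of one particular expansion, and it is secured by the remark following Theorem \ref{1dMove} that shape-theoretic results phrased in terms of inverse sequences hold for any shape expansion. Beyond this bookkeeping the proof is a short chain of the previously established implications.
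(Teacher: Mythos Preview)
Your proposal is correct and matches the paper's approach: the paper marks the corollary with an immediate \qed, having just noted that it is the restatement of Theorem~\ref{1Dstable} in the language of the $L$-invariant, and your chain stability $\Rightarrow$ movability $\Rightarrow$ ML $\Rightarrow$ $\li=1$ simply unpacks what the paper leaves implicit. One small remark: your contrapositive route through Theorem~\ref{ShapeSurf} is actually the cleaner of the two, since it uses only that $\Omega$ is a subcontinuum of a closed surface and does not require you to first argue that the attractor is expanding and of codimension one.
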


In the case of a primitive substitution $\sigma$ on $\A=\left\lbrace a_1,\dots,a_r \right\rbrace$, we develop tools for identifying information about the set $L(\Omega_\sigma)$ from $\sigma$.

First, we recall that in \cite{AP} Anderson and Putnam construct (among other things) a model for $\Omega_\sigma$ in the case of a primitive substitution $\sigma$ on an alphabet $\A$ of $r$ letters which satisfies the property of {\em forcing the border}. In this model -- precisely that appealed to in the construction of Section \ref{subtil} -- the space $\Omega_\sigma$ is described as the inverse limit
$$\Omega_\sigma=\displaystyle\lim_{\longleftarrow}\left\{\cdots
\bigvee^rS^1\buildrel s\over\longrightarrow\bigvee^rS^1\to\cdots\to\bigvee^rS^1\right\}$$
for a self map $s$ which in $\pi_1(-)$ realises the substitution $\sigma$. We immediately have

\begin{prop}
In the case where $\sigma$ forces the border \cite{AP} we have $L(\Omega_\sigma)=\li(s_*)^n$.\qed
\end{prop}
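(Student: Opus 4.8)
The plan is to read off both sides of the claimed equality directly from the Anderson--Putnam presentation of $\Omega_\sigma$ recalled immediately above, so that the proof reduces to unwinding the definition of the $L$-invariant. Because $\sigma$ forces the border, that presentation exhibits $\Omega_\sigma$ as the inverse limit of the tower
$$\cdots\bigvee^rS^1\buildrel s\over\lra\bigvee^rS^1\to\cdots\to\bigvee^rS^1$$
in which every term $\bigvee^rS^1$ is a path-connected, pointed finite CW complex (with base point the wedge point $p$). By the Remark in Section 2, any representation of a continuum as an inverse limit of finite CW complexes yields a shape expansion; hence this tower is a legitimate $\PP$-expansion of $\Omega_\sigma$ of exactly the kind demanded in the definition of $L(-)$.

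Next I would apply $\pi_1(-)$ levelwise. Since $\pi_1(\bigvee^rS^1,p)=F^r$ and the self-map $s$ induces the endomorphism $s_*$ on $\pi_1$, the resulting inverse sequence of homotopy groups is
$$\cdots\to F^r\buildrel s_*\over\lra F^r\to\cdots\to F^r.$$
By definition, $L(\Omega_\sigma)$ is the $\li$ of this sequence of fundamental groups; and by the shorthand $\li(f)^n$ introduced earlier for the $\li$ of a constant tower built from a single endomorphism $f$, this is precisely $\li(s_*)^n$. Therefore $L(\Omega_\sigma)=\li(s_*)^n$, as claimed.

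There is essentially no obstacle here beyond checking that we are entitled to compute $L$ using this particular expansion rather than some other. This is guaranteed because $L(-)$ was defined so as to be independent of the chosen $\PP$-expansion with path-connected, pointed complexes --- it is a shape invariant --- so producing any one valid expansion (here the Anderson--Putnam model) suffices to pin down its value. The hypothesis that $\sigma$ forces the border enters only to make the Anderson--Putnam machinery apply and deliver the stated model; once that model is in hand, the identification is a formal consequence of the definitions.
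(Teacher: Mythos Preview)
Your proposal is correct and is exactly the argument the paper intends: note that the proposition carries a \qed\ with no displayed proof and is introduced by ``We immediately have,'' so the paper regards it as an immediate unwinding of the definition of $L(-)$ using the Anderson--Putnam model---precisely what you have written out.
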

The techniques such as those developed in the  Section \ref{Realising} give methods of deciding if this vanishes.

However, it may well be that  $\sigma$ does not force the border, and the situation is then more complex. A number of models for $\Omega_\sigma$ as an inverse limit of a single bonding map are available, for example the Anderson-Putnam complex of collared tiles \cite{AP}, or the Barge-Diamond model \cite{BD} consisting of a complex $B$ with self map $g$ and subcomplex $Y$ composed of the so-called gluing tiles. We consider this latter model. As before, $L(\Omega_\sigma)=\li (g_*)^n$, and in principle all the data needed to compute this set is contained in the substitution $\sigma$. The following, however, provides a convenient tool. Recall from \cite{BD} that collapsing the subspace $Y$ to a point yields the space  $\bigvee^rS^1$ of the previous construction, with the commutative diagram of self maps
$$\begin{array}{rcl}
B&\longrightarrow&\bigvee^rS^1\\
g\big\downarrow&&s\big\downarrow\phantom{s}\\
B&\longrightarrow&\bigvee^rS^1\,.\end{array}$$

\begin{prop}\label{noShape}
Suppose $Y$ is path connected. Then $L(\Omega_\sigma)\not=1$ if $\li(s_*)^n\not=1$.
\end{prop}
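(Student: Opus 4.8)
The plan is to compare the tower computing $L(\Omega_\sigma)=\li(g_*)^n$ with the tower computing $\li(s_*)^n$ through the collapse map $q\colon B\to B/Y=\bigvee^rS^1$, and to read off the desired non-vanishing from the six-term exact sequence (\ref{6term}). The key point is that $q$ induces a \emph{surjection} on fundamental groups precisely because $Y$ is path connected, and this surjection is compatible with the self-maps $g$ and $s$.

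First I would record the algebraic consequence of connectedness. Collapsing a path connected subcomplex, the quotient map $q\colon B\to B/Y$ is, up to homotopy, the inclusion of $B$ into the mapping cone $B\cup_Y CY$; since the cone $CY$ is contractible, van Kampen gives $\pi_1(B/Y)\cong\pi_1(B)/N$, where $N$ is the normal closure of the image of $\pi_1(Y)$ in $\pi_1(B)$. Hence $q_*\colon\pi_1(B)\to F^r$ is surjective, with kernel $K=N$. This surjectivity is exactly where the hypothesis that $Y$ is path connected is used; without it $q_*$ need not be onto and the argument below breaks down.

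Next I would assemble a short exact sequence of inverse systems. The commuting square relating $g$, $s$ and the collapse map gives $q_*\circ g_*=s_*\circ q_*$, so $g_*$ carries $K=\ker q_*$ into itself. Writing $g_*|_K$ for the restriction, I obtain a short exact sequence of towers with constant terms
$$1\to (K,\,g_*|_K)\to(\pi_1(B),\,g_*)\buildrel q_*\over\lra(F^r,\,s_*)\to1.$$
Applying the six-term exact sequence (\ref{6term}) to this short exact sequence of inverse systems of groups produces an exact sequence of pointed sets ending in
$$\cdots\to\li(g_*)^n\to\li(s_*)^n\to1,$$
so the induced map $L(\Omega_\sigma)=\li(g_*)^n\to\li(s_*)^n$ is surjective. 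Here I am using, as identified just before the statement, that $L(\Omega_\sigma)$ may legitimately be computed from the Barge--Diamond tower $(\pi_1(B),g_*)$, since $L$ is defined via $\li\pi_1(X_n)$ for any $\PP$-expansion.

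Finally, surjectivity closes the argument: a surjection of pointed sets whose target $\li(s_*)^n$ contains a non-basepoint element must have a domain containing a non-basepoint element, so $\li(s_*)^n\neq1$ forces $\li(g_*)^n\neq1$, i.e. $L(\Omega_\sigma)\neq1$. I expect the only genuinely delicate point to be the justification that $q_*$ is surjective (the role of $Y$ being path connected) together with the verification that $g_*$ preserves $K$, so that the sequence of towers is truly short exact; once these are in place, the implication follows immediately from (\ref{6term}) and the elementary behaviour of surjections of pointed sets.
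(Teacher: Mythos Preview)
Your proposal is correct and follows essentially the same approach as the paper: establish that the collapse map induces a surjection on $\pi_1$ (you via van Kampen, the paper via lifting generators along a maximal tree in $Y$), form the resulting short exact sequence of towers, and read off the surjection $\li(g_*)^n\to\li(s_*)^n$ from the tail of the six-term sequence (\ref{6term}). Your explicit check that $g_*$ preserves the kernel is a point the paper leaves implicit in its commuting square, but otherwise the arguments coincide.
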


\smallskip
\noindent{\bf Proof.}
If $Y$ is path connected, then the quotient map $B\to \bigvee^rS^1$ induces a surjection $\pi_1(B)\to
\pi_1\left(\bigvee^rS^1\right)$; this follows by observing that we can take the individual $S^1$'s as generating loops of $\pi_1\left((\bigvee^rS^1\right)$, and, by choosing a suitable maximal tree in $Y$, each loop can be lifted. This will not be possible in general if $Y$ is not path connected.

Then we have an induced short exact sequence of groups and self maps
$$\begin{array}{ccccccccc}
1&\to &Q&\lra&\pi_1(B)&\lra&\pi_1\left((\bigvee^rS^1\right)&\to &1\\
&&
\phantom{g_*|_Q}\big\downarrow g_*|_Q&&\big\downarrow g_*&&\big\downarrow s_*&&\\
1&\to &Q&\lra&\pi_1(B)&\lra&\pi_1\left(\bigvee^rS^1\right)&\to &1\end{array}$$
yielding an exact sequence of inverse limits finishing
$$\cdots\lra\lim\!\null^1(\pi_1(B);g_*)\lra\lim\!\null^1\left(\pi_1\left(\bigvee^rS^1\right);s_*\right)\lra 1\,.$$
Hence if $\li(s_*)^n\not=1$ then $\li(g_*)^n\not=1$. \qed

\medskip
The following examples illustrate applications of this result. In each case it is straightforward to check that the complex $Y$ is path connected and then the non-vanishing of the respective  $\li(s_*)^n$ follows by Proposition \ref{lim1cond} since each substitution homomorphism $s_*\colon F^r\to F^r$ fails to be onto (in each case its abelianisation  is however an isomorphism).

\begin{example}
The substitution $\sigma,$ $$1\mapsto 1131,\;\;2\mapsto1231,\;\;3\mapsto 232$$ is mentioned in~\cite{HM} as
an example of a substitution whose tiling space $\Omega_{\sigma}$ cannot be embedded in an orientable surface and yet meets the condition of \cite{HM} on even cycles of asymptotic composants.  Direct computation shows that $\sigma$ has
a connected gluing cell subcomplex $Y$ and $\im s_*$ is of rank 3.
However, the Stallings' folding technique shows that the endomorphism $F^3\to F^3$ induced
by $\sigma$ is not onto. Thus by Proposition \ref{lim1cond} the
group $\li(s_*)^n\not=1$, and so the tiling space is not stable and hence cannot be embedded in a
surface.
\end{example}

\begin{example}
The substitution $$\phi^3\colon\quad a\mapsto abaab,\quad b\mapsto aba \quad \text{ (the cube of
the Fibonacci substitution)}$$ is invertible and its tiling space is
stable and occurs as an attractor for a $DA$--diffeomorphism of the
torus, as sketched in Example \ref{FibEx}. However, the substitution

$$ a\mapsto ababa \;\; b\mapsto baa$$
has the same abelianisation  and (as it has no $bb$) has cohomology
identical to that of $\phi^3$.  Yet, this substitution is
not invertible and its tiling space is not stable, as can be seen by
applying Proposition~\ref{noShape}. Hence, this tiling space cannot
be embedded in a surface.

In particular, we note that these two substitutions have identical cohomology (and hence $K$-theory), but are distinguished by the $L$-invariant.
\end{example}

\begin{example}
Revisiting Example \ref{silly3}, we note that  the substitution on $\A= \left\lbrace a,b,c \right\rbrace$ given by
$$a \mapsto abc,\:\:b\mapsto abc,\;\; c\mapsto a$$
is not invertible, but the $L$-invariant of the tiling space is trivial. In fact the pro-equivalence of corresponding inverse systems shows  its tiling space is homeomorphic to that generated by the invertible
substitution on $\mathcal{B}= \left\lbrace \alpha,\beta \right\rbrace$
$$\alpha \mapsto \beta,\;\;\beta\mapsto \beta\beta\alpha$$
whose tiling space is stable and occurs as an attractor for a
$DA$--diffeomorphism of the torus.
\end{example}

\section{Higher dimensional codimension one attractors}\label{Higherd}

In this section we examine the nature of continua $A$ that embed as
a codimension one expanding attractor of a diffeomorphism $h$ of a
closed manifold $M$ of dimension at least 3. We continue with the notation that the dimension of $M$ is $d+1$.

We prove Theorems \ref{thm1} and \ref{thm2} for $d>1$ in Corollaries \ref{prfthm1} and \ref{proj} below, utilising the work of Plykin detailed in Section \ref{EAcodim1} and the analysis of projection tiling spaces by Forrest, Hunton and Kellendonk in \cite{FHK1}. We conclude with some further results about the restrictions our results imply on the possible codimension one embeddings of tiling spaces.

\medskip
\subsection{The shape of codimension 1 attractors.} To prove
Theorem~\ref{Por}, Plykin constructs a pair of transverse foliations
of $\overline{W^s(A)}$, the compactification of the basin of
attraction $W^s(A).$ In one foliation $\mathcal{S}$ all leaves are
homeomorphic to $\R$ and include (sometimes as subsets) the stable
manifolds of the points of $A$. The other foliation $\mathcal{U}$ is
by leaves homeomorphic to $\R^d$ and among the leaves are the
unstable manifolds of points of $A$. The nature of closed manifolds
admitting such foliations leads to the conclusion that
$\overline{W^s(A)}$ is homeomorphic to $\mathbb{T}^{d+1}.$ In the
construction, the $k$ points $\{x_1,\dots,x_k\}$ that are added
become fixed points of a diffeomorphism  $\overline{h}$ of
$\overline{W^s(A)}$ which when restricted to $A$ coincides with $h$.
This diffeomorphism is then topologically conjugate to a
$DA$--diffeomorphism of $\mathbb{T}^{d+1};$ in particular, a
diffeomorphism of $\mathbb{T}^{d+1}$ that is derived from an expanding
automorphism $\alpha$ with $1$--dimensional stable and
$d$--dimensional unstable manifolds by introducing repelling
periodic points (sources) at a finite number of periodic orbits of
$\alpha.$ The points $\{x_1,\dots,x_k\}$ in the remainder of the
compactification $\overline{W^s(A)}$ correspond under the topological
conjugacy to the points at which a source has been added to
construct the $DA$--diffeomorphism.

\begin{theorem}\label{or} If the continuum $A$
occurs as a codimension one expanding attractor of a diffeomorphism
$h$ of a closed manifold $M$ of dimension $d+1\geqslant 2,$ then $A$ is
stable.
\end{theorem}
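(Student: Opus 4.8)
The plan is to present $A$ as the inverse limit of a \emph{constant} tower whose single bonding map is a homotopy equivalence, and then to quote the sufficient condition for stability recorded in the remark following Definition \ref{stable} (a tower of finite polyhedra with bonding maps homotopy equivalences has stable limit). The surface case $d=1$ is already Theorem \ref{1Dstable}, so the only new content is $d\geqslant2$, and I would reduce everything, in both the orientable and unorientable cases, to checking that one self-map is a homotopy equivalence.

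Take first the orientable case. By Plykin's Theorem \ref{Por} the basin $W^s(A)$ is homeomorphic to $\TT^{d+1}-\{x_1,\dots,x_k\}$, and after compactifying, $\overline{h}$ is conjugate to a $DA$-diffeomorphism of $\TT^{d+1}$ with the $x_i$ as repelling sources. I would fix small open balls $B_i\ni x_i$ and set $N=\TT^{d+1}-\bigcup_i B_i$. Because the $x_i$ are sources, $\overline{h}(B_i)\supset\overline{B_i}$, whence $\overline{h}(N)\subset\mathrm{int}(N)$ and $A=\bigcap_{n\geqslant0}\overline{h}^{\,n}(N)$. Since $\overline{h}$ is a homeomorphism and $A=\overline{h}(A)$ is invariant, the assignment $x\mapsto(\overline{h}^{-n}(x))_{n\geqslant0}$ lands in $N$ at every coordinate and identifies $A$ homeomorphically with $\underleftarrow{\lim}(N,\overline{h}|_N)$, where $\overline{h}|_N$ is $\overline{h}$ regarded as a self-map of $N$ via $\overline{h}(N)\subset N$.

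The heart of the argument is that $\overline{h}|_N$ is a homotopy equivalence, which I would obtain by factoring it as $N\xrightarrow{\ \overline{h}\}\overline{h}(N)\hookrightarrow N$. The first map is a homeomorphism. For the second, observe that $N$, $\overline{h}(N)$ and $W^s(A)=\TT^{d+1}-\{x_i\}$ are each a torus with a finite family of balls (or points) deleted, and the inclusions $\overline{h}(N)\hookrightarrow N\hookrightarrow W^s(A)$ are deformation retracts: one pushes each punctured ball, respectively each annular shell $\overline{h}(B_i)-B_i$, radially out to the appropriate boundary sphere. Hence all three spaces are homotopy equivalent to the finite polyhedron $\TT^{d+1}-\{k\}$, the inclusion $\overline{h}(N)\hookrightarrow N$ is a homotopy equivalence, and therefore so is the composite $\overline{h}|_N$. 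Applying the sufficient condition for stability to the tower $(N,\overline{h}|_N)$ then yields that $A$ is stable (indeed shape equivalent to $N$).

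For the unorientable case I would descend this through the two-to-one cover of Theorem \ref{Punor}. Running the orientable analysis on $\tilde h$ produces a deck-invariant neighbourhood $\tilde N\subset\widetilde{W^s(A)}$ with $\tilde N\hookrightarrow\widetilde{W^s(A)}$ a homotopy equivalence; putting $N=\tilde N/(\Z/2)$ and using that a covering map induces isomorphisms on all higher homotopy groups, together with compatibility of the index-two subgroups on $\pi_1$, a Whitehead-theorem argument shows $N\hookrightarrow W^s(A)$ and $h(N)\hookrightarrow W^s(A)$ remain homotopy equivalences downstairs; the same factorisation then makes $h|_N$ a homotopy equivalence and the stability of $A$ follows exactly as above. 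The routine parts here are the inverse-limit identification and the final appeal to the stability criterion; the genuine obstacle is verifying that the bonding map $\overline{h}|_N$ is a homotopy equivalence, i.e.\ that the deleted-ball neighbourhoods include as deformation retracts, and, in the unorientable case, that this homotopy equivalence survives the passage across the double cover.
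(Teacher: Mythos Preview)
Your approach is essentially the paper's: both present $A$ as the inverse limit of a tower whose terms are homotopy equivalent to $\TT^{d+1}$ with $k$ open balls removed and whose bonding maps are homotopy equivalences. The paper uses the nested complements $W_n=\overline{W^s(A)}-f^n(U)$ with inclusion maps, while you use the single self-map $\overline{h}|_N$; since $W_n=\overline{h}^{\,n}(N)$ and $\overline{h}^{\,n}\colon N\to W_n$ is a homeomorphism conjugating your bonding map to the paper's inclusion, the two towers are isomorphic in pro-$\PP$.

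There is one genuine slip. You write ``because the $x_i$ are sources, $\overline{h}(B_i)\supset\overline{B_i}$,'' but the $x_i$ need not be fixed by $\overline{h}$: Plykin's description of the $DA$-diffeomorphism inserts sources along \emph{periodic orbits}, so $\overline{h}$ may permute the $x_i$, and then $\overline{h}(B_i)$ is a neighbourhood of some other $x_j$, not of $x_i$. The paper handles this by first replacing $\overline{h}$ by a positive iterate $f$ that fixes each $x_i$ (and preserves local orientation); you should do the same before choosing the balls. This does not affect the conclusion, since $A$ is equally the attractor of $f$ and stability is unchanged.

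In the unorientable case your outline is correct but compressed. The point requiring care is that the neighbourhood $\tilde N$ be chosen deck-invariantly (the paper calls these ``saturated'' neighbourhoods, and arranges this explicitly using that the involution fixes the added points $\tilde x_i$). Once that is done, your Whitehead argument goes through: $N$, $h(N)$ and $W^s(A)$ are manifolds (hence have CW homotopy type), the covering maps give isomorphisms on $\pi_n$ for $n\geqslant2$, and the commuting index-two squares on $\pi_1$ force the downstairs inclusions to be $\pi_1$-isomorphisms as well. The paper instead pushes the upstairs isotopies through $\pi$ directly, which is perhaps more concrete but amounts to the same thing.
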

\begin{proof}
The result for $\dim M=2$ is covered by Theorem~\ref{1Dstable}. We first treat the case of
$\dim M \geqslant 3$ and $A$ is orientable when Theorem~\ref{Por} and the
related constructions apply. First we replace $\overline{h}$ by a
positive iterate $f$ that fixes all the points $\{x_1,\dots,x_k\}$
in the remainder of the compactification $\overline{W^s(A)}$ and
preserves the orientation at each fixed point in the remainder. We
first note that $\cap_{n \in \N}f^n(W^s(A))=A$ and that the pair of
transverse foliations $\mathcal{S}$ and $\mathcal{U}$ of
$\overline{W^s(A)}$ are constructed in such a way that they are
invariant under $\overline{h}$ and thus $f.$  At each point $x_i$
one can form a neighborhood $U_i$ admitting a homeomorphism $h_i$
onto $\R^d \times (-1,1),$ where

\begin{enumerate}
\item $U_i \cap A = \emptyset$
\item $h_i(x_i)=(\mathbf{0},0),$
  \item for each $x\in \R^d,$ the segment $x\times
  (-1,1)$ is contained in a leaf of $\mathcal{S}$ and
  \item for each $x \in (-1,1),$ the hyperplane
  $\R^d \times x$ is contained in a leaf of
  $\mathcal{U}.$

\end{enumerate}
Moreover, we choose our neighborhoods $U_i$ to be pairwise disjoint.
In fact, Plykin~\cite{P1} constructs such neighborhoods, but their
existence also follows from the usual construction
of foliation charts for the foliations $\mathcal{S}$ and $\mathcal{U}$ once these foliations are known to exist.

Now let $U = \cup_{i=1}^k \,U_i$. Then since $\cap_{n \in
\N}f^n(W^s(A))=A,$ we also have that $\cap_{n \in \N}f^n(N)=A,$
where $N=\overline{W^s(A)}-U.$ Let $V_n = f^n(U)$. For each $n\in N$, the set $V_n$ has as its connected
components the $k$ sets formed by the $f^n$ images of the sets $U_i.$ By construction, one can
isotopically deform each of the components of $V_n$ within $V_{n+1}$ to obtain the components of
$V_{n+1}$. To see this, first observe that $V_{n+1} \supset V_n$ since $f$ has a repelling fixed point at each
$x_i$. Next, observe that $f$ expands the central hyperplane in each component of $V_n$ given
by $f^n(h_i^{-1}(\R^d\times 0))$ while fixing $x_i,$ yielding the central hyperplane of the corresponding component of
$V_{n+1}.$  At the same time, $f$ maps the hyperplanes $f_n(h_i^{-1}(\R^d\times x))$
onto corresponding hyperplanes of $V_{n+1}.$ Thus, $V_n$ sits tamely within $V_{n+1}$ as the union of $k$ balls within $k$ larger
balls in a way that can be nicely parameterised. Thus, letting $W_n = \overline{W^s(A)}-V_n$ we see
that for each $n \in N$ the inclusion $W_{n+1} \hookrightarrow W_n$ is a homotopy equivalence. As the complement of $k$ open balls in $\mathbb{T}^{d+1},$  each $W_n$
is homotopy equivalent to a finite polyhedron. Thus, the
limit of the inverse sequence
$$\cdots \hookrightarrow W_2 \hookrightarrow W_1 \hookrightarrow W_0$$
is stable. But this inverse limit is homeomorphic to
$\cap_n\,W_n=A.$

\medskip

We next treat the case that $A$ is unorientable and make use of the
double covering $\pi :\widetilde{W^s(A)}\rightarrow W^s(A)$ as in
Theorem~\ref{orcover}.  First, as above we construct the
neighborhoods $\widetilde{V_n}$ and $\widetilde{W_n}$ for
$\tilde{h}.$ One of the important features of the covering $\pi$ is
that its extension to the compactification
$\overline{\widetilde{W^s(A)}}$ is conjugate to the identification
map of an involution $I$ of the torus $\mathbb{T}^{d+1}$ that has the
form of the composition of a translation and the map $x \mapsto -x
$. Moreover, the points $\{\tilde{x_1},\dots,\tilde{x_n}\}$ in the
remainder of the compactification $\overline{\widetilde{W^s(A)}}$
correspond to the fixed points of $I.$ We then replace the
neighborhoods $\widetilde{V_n}$ by saturated neighborhoods
$\widetilde{V'_n}$ of the same form satisfying the four above
conditions on $V_n$; that is, neighborhoods that are closed under
application of the involution $I,$ which is possible since $I$ fixes
the points corresponding to $\{\tilde{x_1},\dots,\tilde{x_n}\}.$ We
then form the neighborhoods
$\widetilde{W'_n}=\overline{\widetilde{W^s(A)}}-\widetilde{V'_n}$
which satisfy $\cap_n\, \widetilde{W'_n} = \tilde{A}$ as above.  As
$\pi \circ \tilde{h} = h \circ \pi,$  the sets
$W_n=\pi(\widetilde{W'_n})$ satisfy $\cap_n W_n = A.$ As the sets
$\widetilde{V'_n}$ were constructed to be saturated open sets, the
sets $V_n=\pi(\widetilde{V'_n})$ are open in $W^s(A)$ and the
isotopy deformations of $\widetilde{V'_n}$ into
$\widetilde{V'_{n+1}}$ can be constructed so that they are mapped by
$\pi$ to homotopy equivalences of $V_n$ and $V_{n+1}.$ Then just as
above we have that $A$ can be realised as an inverse limit of the
homotopy equivalences $W_{n+1} \hookrightarrow W_n.$ At each stage
$W_n$ will be a compact manifold with boundary and so still have the
homotopy type of a finite polyhedron~\cite{West}. Thus, the
conclusion that $A$ is shape equivalent to $W_n$ still holds and so
$A$ is stable.
\end{proof}

\medskip
An immediate corollary of this proof is the statement of Theorem \ref{thm1} for $d>2$:

\begin{cor} \label{prfthm1} If $A$ is a codimension 1 orientable attractor in $M$, then it is shape equivalent to a $(d+1)$-torus with some finite number of points removed, and, in the unorientable case, it has such a space as a double cover.
\end{cor}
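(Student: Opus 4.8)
The plan is to read the statement off directly from the proof of Theorem \ref{or}, whose only outstanding step is to exchange the compact manifolds-with-boundary $W_n$ appearing there for homotopy-equivalent models with \emph{points}, rather than balls, removed. First I would recall exactly what that proof supplies. In the orientable case it presents $A$ as the inverse limit of a tower $\cdots\hookrightarrow W_2\hookrightarrow W_1\hookrightarrow W_0$ in which every bonding inclusion is a homotopy equivalence and each $W_n=\overline{W^s(A)}-V_n$ is the complement in $\overline{W^s(A)}\cong\TT^{d+1}$ (Theorem \ref{Por}) of $k$ disjoint open balls. Since the bonding maps are homotopy equivalences and each $W_n$ has the homotopy type of a finite polyhedron, the sufficient condition for stability noted after Definition \ref{stable} gives that $A$ is shape equivalent to any single $W_n$.

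The one genuinely new observation is the identification of the homotopy type of $W_n$. Deleting a closed ball about a point, rather than the point itself, does not alter homotopy type: each punctured open ball deformation retracts onto its bounding sphere $S^d$, and splicing these local retractions together (the identity away from the deleted centres) exhibits $\TT^{d+1}-\{x_1,\dots,x_k\}$ as deformation retracting onto $\TT^{d+1}$ with the $k$ open balls removed, that is, onto $W_n$. Hence $W_n\simeq\TT^{d+1}-\{k\}$, and as homotopy equivalent spaces are shape equivalent, $A$ is shape equivalent to $\TT^{d+1}-\{k\}$, giving the orientable assertion.

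For the unorientable case I would run the same argument one level up the double cover $\pi\colon\widetilde{W^s(A)}\to W^s(A)$ of Theorem \ref{Punor}. The proof of Theorem \ref{or} shows $A$ is shape equivalent to the polyhedron $W_n=\pi(\widetilde{W'_n})$, where by construction $\widetilde{W'_n}$ is $\overline{\widetilde{W^s(A)}}\cong\TT^{d+1}$ with the $k$ balls about the fixed points of the involution $I$ deleted. The map $\pi$ restricts to a $2$-to-$1$ covering $\widetilde{W'_n}\to W_n$, and by the homotopy model above $\widetilde{W'_n}\simeq\TT^{d+1}-\{k\}$. Thus $A$ is shape equivalent to a polyhedron admitting a $2$-to-$1$ cover by some $\TT^{d+1}-\{k\}$, exactly as claimed.

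The main point requiring care—rather than a conceptual obstacle—is equivariance in the unorientable case: I must check that the passage from balls to points can be performed $I$-equivariantly, so that the local deformation retractions chosen upstairs descend under $\pi$ to a genuine homotopy model downstairs. This is guaranteed precisely because $I$ fixes each deleted centre, so the saturated neighbourhoods $\widetilde{V'_n}$ (and hence the retractions of the punctured balls onto their boundary spheres) can be chosen symmetrically and pushed down by $\pi$. Everything else is bookkeeping, which is why the result is stated as an immediate corollary of the proof of Theorem \ref{or}.
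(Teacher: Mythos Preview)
Your proposal is correct and is essentially the same approach as the paper, which simply declares the result an immediate corollary of the proof of Theorem~\ref{or} without further argument. You have correctly identified and supplied the one small detail the paper leaves implicit, namely that $\TT^{d+1}$ with $k$ open balls removed is homotopy equivalent to $\TT^{d+1}-\{k\}$, together with the equivariance check needed in the unorientable case.
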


\begin{remark} It should be noted that being stable is not a sufficient condition
for embedding as a codimension one attractor. In fact,
Kalugin~\cite{Kal} has shown that the Penrose tiling space is
stable (a fact also deducible from \cite{AP}), but by Corollary~\ref{codim=comdim} below it cannot be embedded as a codimension one attractor: in short, no tiling space resulting from a canonical projection scheme with internal space of dimension more than 1 has the right cohomology.
\end{remark}

\medskip
\subsection{Realising as projection tiling spaces.}
We turn now to the proof of Theorem \ref{thm2} for manifolds $M$ of dimension at least 3, that is, the realisation up to homeomorphism of an orientable attractor in $M$ as a tiling space.

By a {\em projection tiling\/} we shall mean a tiling constructed
from a  projection scheme, as described in \cite{FHK1,
FHK2, GHK1, GHK2}. In keeping with the notation of those papers, the
dimension of the euclidean space in which the tiling is constructed
(the {\em external space}) $E$ is $d$, and the {\em codimension} (the
dimension of the {\em internal space} $E^\perp$) is denoted $n$.  To avoid confusion with the codimension of the attractors considered, we shall call $n$ the {\em internal dimension\/} of the projection scheme. The data for  the projection scheme consists of the {\em total space\/} $\R^{n+d}=E\oplus E^\perp$ which contains an $n+d$ dimensional lattice
$\Gamma$ and an {\em acceptance domain\/} $K\subset E^\perp$, a non-empty compact set which is the closure of its interior. The projection scheme is called {\em canonical\/} if $K$ is the projection to the internal space of an $n+d$ dimensional cube, and otherwise we shall call it {\em generalised}. A related class of projection schemes are the so-called {\em almost canonical\/} ones, introduced in \cite{GHK2}, and to which our results below also apply.

Given such a projection scheme, a whole family of tilings may be constructed, all locally equivalent to each other; indeed in some sense the tiling space, again denoted $\Omega$, for any (and all) of them is as naturally defined in terms of the projection scheme data as by any individual tiling. We consider in particular those with internal dimension 1.

The case of a generalised projection tiling space for general $d$ but $n=1$ is studied in detail in ~\cite[Chapt. III]{FHK1}. There $K$ consists of a (countable) disjoint union of closed intervals, and the resulting tiling space is given by the $(d+1)$-torus $\TT^{d+1}=(E\oplus E^\perp)/\Gamma$ cut on a set of $E$-orbits generated by the image of the boundary points of $K$ in $\TT^{d+1}$. In particular, it is shown how using such a scheme one can construct any Denjoy--like example, obtaining a $\Z^d$ action on a Cantor set given by cutting open any countable number of orbits of a $\Z^d$ action by translations on the circle. These actions lead to generalised projection tiling spaces that include as special cases the orientable attractors described in Theorem~\ref{Por}. Hence, we have the following.

\begin{cor}\label{proj}
Any orientable codimension 1 attractor in a manifold of dimension at least 3 is homeomorphic to the tiling space of a generalised projection with internal dimension one.\qed
\end{cor}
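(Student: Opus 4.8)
The plan is to start from the dynamical description of $A$ supplied by Plykin's Theorem~\ref{Por} and to reinterpret it as the Denjoy-like generalised projection construction of \cite[Chapt.~III]{FHK1} recalled above. By Theorem~\ref{Por} and the constructions following it, after passing to the compactification $\overline{W^s(A)}\cong\TT^{d+1}$ I may assume that $A$ is the expanding attractor of a $DA$-diffeomorphism derived from an Anosov automorphism $\alpha\in\mathrm{GL}(d+1,\Z)$ whose stable subspace $E^s\subset\R^{d+1}$ is $1$-dimensional and whose unstable subspace $E^u$ is $d$-dimensional, with sources inserted along a finite set of periodic orbits of $\alpha$. I would take the projection scheme to have total space $\R^{d+1}=E\oplus E^\perp$ with external space $E=E^u$, internal space $E^\perp=E^s$ (so the internal dimension is $n=1$, precisely because the attractor has codimension one), and lattice $\Gamma=\Z^{d+1}$.

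The key geometric step is to realise $A$ as the torus $\TT^{d+1}$ cut along a finite set of $E$-orbits. The projection $\R^{d+1}\to E^\perp$ along $E$ carries $\Gamma$ onto a dense rank-$(d+1)$ subgroup $\Lambda\subset E^\perp\cong\R$; fixing one generator $\lambda_0$ and setting $S^1=E^\perp/\langle\lambda_0\rangle$, the remaining generators $\lambda_1,\dots,\lambda_d$ descend to $d$ commuting irrational rotations, giving a minimal $\Z^d$-action by translations on $S^1$. I would check that the suspension (mapping torus) of this action recovers $(E\oplus E^\perp)/\Gamma=\TT^{d+1}$, with $S^1$ appearing as a circle transversal to the unstable $E$-foliation and the $\Z^d$-action as its holonomy. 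The $DA$-modification then cuts this transversal open along the finitely many $\Z^d$-orbits passing through the periodic points that carry the inserted sources: $A$ meets the transversal in the Cantor set obtained from $S^1$ by splitting each such orbit into its two one-sided limits, and $A$ itself is the suspension of the resulting $\Z^d$ Cantor action.

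It then remains to match this data with the generalised projection description of \cite[Chapt.~III]{FHK1}, where the $n=1$ tiling space is $\TT^{d+1}=(E\oplus E^\perp)/\Gamma$ cut along the $E$-orbits generated by the images in $\TT^{d+1}$ of the boundary points of an acceptance domain $K\subset E^\perp$, with $K$ a disjoint union of closed intervals. I would choose the endpoints of these intervals to be representatives of the finitely many $\Z^d$-orbits on $S^1$ along which the $DA$-construction cuts. With this choice the two cut loci coincide, so the Denjoy-like projection tiling space $\Omega$ is homeomorphic to $A$, which is exactly the assertion that these actions realise the attractors of Theorem~\ref{Por}.

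The main obstacle I anticipate is the bookkeeping in the geometric step: making precise the identification of the $DA$-attractor with the cut-open torus, that is, producing an explicit homeomorphism between $A$ and $\TT^{d+1}$ cut along the chosen orbits, and verifying that the set of cut orbits and the manner in which each orbit is doubled correspond exactly both to the sources inserted by the $DA$-construction and to the interval endpoints of $K$. Here orientability of $A$ is essential: it is what allows the cutting to be performed coherently on a single circle transversal, rather than on a $2$-to-$1$ cover as would be forced in the unorientable case of Theorem~\ref{Punor}, so that $A$ is genuinely a projection tiling space rather than merely covered by one. Establishing the existence of a consistent transversal and the compatibility of the suspension with the $DA$ dynamics is the step demanding the most care; once the cut loci are matched, the homeomorphism $A\cong\Omega$ follows directly from the construction of \cite{FHK1}.
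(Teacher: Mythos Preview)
Your proposal is correct and follows essentially the same route as the paper: invoke Plykin's Theorem~\ref{Por} to identify $A$ with the attractor of a $DA$-diffeomorphism on $\TT^{d+1}$, then recognise this as a Denjoy-like cut-open torus and match it with the $n=1$ generalised projection construction of \cite[Chapt.~III]{FHK1}. The paper's own argument is the short paragraph preceding the corollary and is considerably less explicit than yours---it simply asserts that the Denjoy-like examples of \cite{FHK1} include the orientable attractors of Theorem~\ref{Por} as special cases---so the suspension/transversal picture and the choice of interval endpoints for $K$ that you spell out are elaborations the paper leaves to the reader and to the cited reference.
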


This, together with the work of the previous section, proves Theorem~\ref{thm2} of the Introduction.

\medskip
We conclude with two further points about the realisability or unrealisability of attractors as tiling spaces, or tiling spaces as attractors.

\medskip First, the use of the generalised projection tilings of internal dimension 1 to realise all attractors in manifolds of dimension at least 3 might lead one to wonder if the same projection schemes could also realise all the 1 dimensional attractors in surfaces as well. This is not so.  As any such projection tiling can be embedded in a torus, this shows that an attractor $A$ that may occur only in surfaces of genus greater than one cannot be homeomorphic to a projection tiling space. See, for example, ~\cite{Rob} for a specific higher genus case worked out in detail. Thus we need a different supply of tilings, such as those considered in the previous section, than the projection tilings to account for all the surface cases.

\medskip Secondly, the shape equivalence of an attractor to a $(d+1)$-torus less $k$ points, or to a space of which that is a double cover, puts considerable constraints on realising a given tiling space as an attractor, especially as $d$ gets large. We illustrate this by considering the cohomology of such a space.

\begin{prop}\label{cohcalc} Suppose $A$ is a codimension 1 attractor of a manifold of dimension at least 3. Then if $A$ is orientable, its \v{C}ech cohomology is given by
$$H^p(A)=\left\{
\begin{array}{ll}
0&\mbox{ if }p>d,\\
\Z^{d+k}&\mbox{ if }p=d,\\
H^p(\TT^{d+1})=\Z^{{d+1}\choose p}&\mbox{ if }0\leqslant p<d\\
\end{array}\right.$$
for a finite positive integer $k$. In the unorientable case, the free part of $H^*(A)$ includes in these groups.
\end{prop}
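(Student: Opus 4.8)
The plan is to reduce the whole statement to a single cohomology computation on the model space supplied by Theorem \ref{thm1}. Since \v{C}ech cohomology is a shape invariant and, by Corollary \ref{prfthm1}, an orientable codimension 1 attractor $A$ is shape equivalent to $\TT^{d+1}\!-\{k\}$ for some finite $k\geqslant1$, we have $H^*(A)\cong H^*(\TT^{d+1}\!-\{k\})$. It therefore suffices to compute the cohomology of $X:=\TT^{d+1}\!-\{k\}$, and for this I would use the long exact sequence of the pair $(\TT^{d+1},X)$ together with excision. Excising the complement of $k$ small disjoint balls around the deleted points identifies the relative groups as
$$H^p(\TT^{d+1},X)\;\cong\;\bigoplus_{i=1}^{k}H^p\!\left(D^{d+1},D^{d+1}\!\setminus\{0\}\right),$$
which is $\Z^k$ for $p=d+1$ and $0$ otherwise, being the local cohomology of a point in a $(d+1)$-manifold.

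Feeding this into the exact sequence of the pair, both relative terms adjacent to $H^p(X)$ vanish whenever $p<d$, so $H^p(X)\cong H^p(\TT^{d+1})=\Z^{\binom{d+1}{p}}$ in that range, as claimed. For $p>d$ the groups vanish: since $X$ is obtained from a CW structure on $\TT^{d+1}$ by deleting $k$ points lying in the interiors of distinct top cells, it deformation retracts onto a CW complex of dimension $d$, whence $H^p(X)=0$ for all $p>d$, and in particular $H^{d+1}(X)=0$.

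The one degree requiring genuine care is $p=d$, and this is the crux of the argument. Here the sequence reads
$$0\to H^d(\TT^{d+1})\to H^d(X)\to H^{d+1}(\TT^{d+1},X)\xrightarrow{\;\gamma\;}H^{d+1}(\TT^{d+1})\to H^{d+1}(X),$$
that is $0\to\Z^{d+1}\to H^d(X)\to\Z^k\xrightarrow{\gamma}\Z\to0$, using $H^{d+1}(X)=0$ (which relies on $d+1\geqslant3$, ensuring $X$ is connected). The vanishing of $H^{d+1}(X)$ forces $\gamma\colon\Z^k\to\Z$ to be surjective, so its kernel is free of rank $k-1$, and we are left with a short exact sequence
$$0\to\Z^{d+1}\to H^d(X)\to\Z^{k-1}\to0$$
which splits because the quotient is free; hence $H^d(X)\cong\Z^{(d+1)+(k-1)}=\Z^{d+k}$. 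I expect verifying that $\gamma$ is onto—equivalently $H^{d+1}(X)=0$—to be the main obstacle, since it is exactly this that pins the rank at $d+k$ rather than something larger.

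For the unorientable case, Theorem \ref{thm1} provides a $2$-to-$1$ covering $\TT^{d+1}\!-\{k\}\to P$ with $P$ a finite polyhedron shape equivalent to $A$. I would invoke the transfer homomorphism $\tau$ for this double cover: the composite of $\pi^*$ followed by $\tau$ is multiplication by $2$ on $H^*(P)$, so $\pi^*\otimes\Q$ is injective and the free part of $H^*(A)\cong H^*(P)$ embeds into $H^*(\TT^{d+1}\!-\{k\})$, as asserted.
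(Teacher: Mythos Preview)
Your proof is correct and follows essentially the same strategy as the paper: reduce to $\TT^{d+1}\!-\{k\}$ via shape equivalence, compute its cohomology by a standard exact-sequence argument, and handle the unorientable case with the transfer map over $\Q$. The paper merely says ``elementary Mayer--Vietoris calculation'' where you use the long exact sequence of the pair $(\TT^{d+1},X)$ with excision; these are interchangeable routes to the same computation, and you in fact supply more detail than the paper does (in particular, isolating the surjectivity of $\gamma$ as the point that fixes the rank at $d+k$).
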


\noindent{\bf Proof.} By Theorem \ref{or}, in the unorientable case we know that $A$ will have the same cohomology as $\TT^{d+1}\!\!-\{k\}$ for some finite positive $k$. The cohomology  follows from an elementary Mayer-Vietoris calculation of  $H^*(\TT^{d+1}\!\!-\{k\})$.

In the unorientable case, $A$ has a 2-fold cover
$(\TT^{d+1}\!\!-\{k\})\buildrel\pi\over\lra A$. The transfer map
$\tau^*$ of a covering \cite{bredon} in cohomology with
coefficients in a ring $R$ gives a diagram
$$H^p(A;R)\buildrel\pi^*\over\lra H^p(\TT^{d+1}\!\!-\{k\};R)\buildrel\tau^*\over\lra H^p(A;R)$$
whose composite is multiplication by 2. If we choose $R=\Q$ then
$\pi^*$ must be injective and the result follows.\qed

\medskip
So, we see for example that most canonical projection tilings (in fact all those of codimension 2 or more, such as the Penrose tiling) do not embed as codimension 1 attractors:

\begin{cor}\label{codim=comdim} Suppose the tiling space $\Omega$ of a dimension $d>1$, internal dimension $n$ canonical projection tiling has the shape of a finite polyhedron $P$, and suppose $P$ is homotopy equivalent either to a $(d+1)$-torus with $k$ points removed, $(\TT^{d+1}\!\!-\{k\})$, or else has  $(\TT^{d+1}\!\!-\{k\})$ as a 2-fold cover. Then $n=1$. In particular, no internal dimension $n>1$ canonical projection tiling space can appear as a codimension one attractor of a $C^1$ diffeomorphism of a closed manifold.
\end{cor}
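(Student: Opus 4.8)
The plan is to prove the result by comparing \v{C}ech cohomology, which is a shape invariant and so is shared by $\Omega$ and the polyhedron $P$ to which it is assumed shape equivalent. The single numerical invariant that does all the work is the rank of $H^1$. The strategy is to bound $\mathrm{rank}\,H^1(\Omega)$ from above using the assumed shape of $\Omega$, and from below using the projection structure via the results of \cite{FHK1}; squeezing $d+n$ between these two bounds forces $n=1$.

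First I would record that, since $\Omega$ is shape equivalent to the finite polyhedron $P$ and \v{C}ech cohomology agrees with singular cohomology on polyhedra, $H^*(\Omega)\cong H^*(P)$. For the upper bound I treat the two cases of the hypothesis. If $P\simeq\TT^{d+1}\!\!-\{k\}$, then the Mayer--Vietoris computation underlying Proposition~\ref{cohcalc} gives $H^1(\TT^{d+1}\!\!-\{k\})=\Z^{d+1}$, the removal of points leaving $H^1$ unaffected since $d>1$, so $\mathrm{rank}\,H^1(\Omega)=d+1$. If instead $P$ carries a two-to-one cover by some $\TT^{d+1}\!\!-\{k\}$, then the transfer argument already used in the proof of Proposition~\ref{cohcalc} shows that with $\Q$ coefficients $\pi^*\colon H^1(P;\Q)\to H^1(\TT^{d+1}\!\!-\{k\};\Q)=\Q^{d+1}$ is injective, whence $\mathrm{rank}\,H^1(\Omega)\leqslant d+1$. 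In either case $\mathrm{rank}\,H^1(\Omega)\leqslant d+1$.

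For the lower bound I would invoke the analysis of canonical (and almost canonical) projection tilings in \cite{FHK1}. A canonical projection scheme of internal dimension $n$ is built from the $(d+n)$-torus $\TT^{d+n}=\R^{d+n}/\Gamma$, and the cut-and-project construction supplies a surjection $\Omega\to\TT^{d+n}$ onto the maximal equicontinuous factor which is injective off the singular set. The induced map $H^1(\TT^{d+n};\Q)\to H^1(\Omega;\Q)$ is injective, so the full rank of the lattice is already seen in degree one: $\mathrm{rank}\,H^1(\Omega)\geqslant d+n$. Combining the two bounds gives $d+n\leqslant\mathrm{rank}\,H^1(\Omega)\leqslant d+1$, so $n\leqslant1$; since a genuinely aperiodic projection tiling has $n\geqslant1$, we conclude $n=1$. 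The final sentence then follows by contraposition together with Theorem~\ref{thm1}: were an internal dimension $n>1$ canonical projection tiling space to occur as a codimension one attractor, its stability (Theorem~\ref{or}) and Theorem~\ref{thm1} would present it in precisely the shape demanded by the hypothesis, forcing $n=1$, a contradiction.

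The main obstacle, I expect, is making the lower bound $\mathrm{rank}\,H^1(\Omega)\geqslant d+n$ fully rigorous, that is, extracting from the machinery of \cite{FHK1} that the degree-one cohomology genuinely detects the entire $(d+n)$-dimensional lattice rather than merely the $\TT^d$ visible from the Cantor-bundle structure of \cite{SW}. Concretely, one must verify that the torus-parametrisation map $\Omega\to\TT^{d+n}$ induces an injection on rational $H^1$; this is where the specific form of the canonical acceptance domain, and the fact that the contributions of its boundary occur in degrees that do not cancel the bulk classes coming from $H^1(\TT^{d+n})$, must be used.
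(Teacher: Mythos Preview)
Your proposal is correct and follows essentially the same route as the paper: bound $\mathrm{rank}\,H^1(\Omega)$ above by $d+1$ via Proposition~\ref{cohcalc} and below by $d+n$ via the map $\Omega\to\TT^{d+n}$, then compare. The paper isolates your lower bound as a separate lemma (Lemma~\ref{projH1}) and proves the injectivity of $H^1(\TT^{d+n})\to H^1(\Omega)$ precisely as you anticipate in your final paragraph, by invoking the group-homology long exact sequences of \cite{FHK1} and the vanishing of the relevant $H_s(\Gamma;C^{n-1})$ terms in the range needed, the latter being available only under the finitely-generated-cohomology hypothesis (which you have, since $\Omega$ is shape equivalent to a finite polyhedron).
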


To prove this we need:

\begin{lemma}\label{projH1}
For $\Omega$ the tiling space of a dimension $d$, internal dimension $n$
canonical projection tiling with finitely generated cohomology, the
group $H^1(\Omega)$ contains a free abelian subgroup of rank at
least $n+d$.
\end{lemma}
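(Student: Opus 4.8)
The plan is to produce the required subgroup as the image, under the map induced by the constant integer functions, of the cohomology of the ambient torus $\TT^{n+d}=(E\oplus E^\perp)/\Gamma$. Recall from \cite{FHK1} that the \v{C}ech cohomology of such a projection tiling space is computed as a group cohomology $H^*(\Omega)\cong H^*(\Z^{n+d};C(\Xi,\Z))$, where $\Xi$ is the canonical transversal (a Cantor set) and the lattice $\Gamma\cong\Z^{n+d}$ acts on it; the hypothesis of finitely generated cohomology ensures we are in the setting where this machinery applies cleanly. The constant functions form a trivial submodule $\Z\hookrightarrow C(\Xi,\Z)$, and since $H^*(\Z^{n+d};\Z)=H^*(\TT^{n+d})=\bigwedge\nolimits^*\Z^{n+d}$, in degree one this gives a homomorphism $\iota^*\colon\Z^{n+d}=H^1(\TT^{n+d})\to H^1(\Omega)$. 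As $\Z^{n+d}$ is free abelian of rank $n+d$, it suffices to prove that $\iota^*$ is injective.

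First I would set up the short exact sequence of $\Z^{n+d}$-modules $0\to\Z\to C(\Xi,\Z)\to Q\to0$ with $Q=C(\Xi,\Z)/\Z$, and read off the low-degree part of the associated long exact sequence in group cohomology, namely $H^0(\Z^{n+d};\Z)\to H^0(\Z^{n+d};C(\Xi,\Z))\to H^0(\Z^{n+d};Q)\buildrel\delta\over\lra H^1(\Z^{n+d};\Z)\buildrel\iota^*\over\lra H^1(\Z^{n+d};C(\Xi,\Z))$. Since the action is minimal (the projection tiling is repetitive), the only invariant continuous integer functions are the constants, so $H^0(\Z^{n+d};C(\Xi,\Z))=\Z$ and the first map is an isomorphism; hence the image of $H^0(\Z^{n+d};C(\Xi,\Z))$ in $H^0(\Z^{n+d};Q)$ is zero and $\ker\iota^*=\im\delta\cong H^0(\Z^{n+d};Q)=Q^{\Z^{n+d}}$. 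Thus the whole lemma reduces to showing $Q^{\Z^{n+d}}=0$.

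The key computation is then an elementary boundedness argument. An invariant class in $Q$ is represented by some $f\in C(\Xi,\Z)$ with $(g-1)f$ equal to a constant $c_g\in\Z$ for every $g\in\Z^{n+d}$; a direct cocycle check (using that the action on constants is trivial) shows $g\mapsto c_g$ is a group homomorphism $\Z^{n+d}\to\Z$. Because $\Xi$ is compact and $f$ is continuous into the discrete group $\Z$, $f$ has finite image and is therefore bounded, forcing each $|c_g|=|(g-1)f|$ to be bounded; a bounded homomorphism to $\Z$ is zero, so $(g-1)f=0$ for all $g$, i.e. $f$ is invariant. By minimality $f$ is constant, so its class in $Q$ vanishes. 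Hence $Q^{\Z^{n+d}}=0$, $\iota^*$ is injective, and $\im\iota^*\cong\Z^{n+d}$ is the desired free abelian subgroup of $H^1(\Omega)$ of rank $n+d$.

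The main obstacle is marshalling the input from \cite{FHK1} correctly: one must use that for a (canonical or almost canonical) projection scheme the lattice action on the transversal is minimal, and that the constant functions really do give the trivial $\Z^{n+d}$-submodule whose cohomology is that of $\TT^{n+d}$. Once these structural facts are in hand the injectivity is formal. Geometrically $\iota^*$ is the map induced by the natural collapse $\Omega\to\TT^{n+d}$, and the content of the lemma is that this map kills none of the $n+d$ independent circle directions of the torus; the boundedness argument is precisely what rules out such a collapse.
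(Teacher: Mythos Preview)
Your argument is correct and takes a genuinely different route from the paper's proof. The paper invokes the specific filtration of $\Gamma$-modules $0\to C^n\to C^{n-1}\to C^{n-2}_0\to 0$ from \cite{FHK1}, identifies the connecting map $H_{s+1}(\Gamma;C^{n-2}_0)\to H_s(\Gamma;C^n)$ with $H^{d-s}(\TT^{n+d})\to H^{d-s}(\Omega)$, and then uses the finitely generated hypothesis in an essential way: it forces $n\mid d$ and gives the vanishing range $H_s(\Gamma;C^{n-1})=0$ for $s>\tfrac{d}{n}(n-1)$, which at $s=d$ yields the injection $0\to H^1(\TT^{n+d})\to H^1(\Omega)$. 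Your approach instead works directly with the short exact sequence $0\to\Z\to C(\Xi,\Z)\to Q\to0$, reduces injectivity of $\iota^*$ to $Q^{\Gamma}=0$, and disposes of that by the elementary boundedness argument (a continuous $\Z$-valued function on a compact set is bounded, hence the induced homomorphism $\Gamma\to\Z$ is bounded, hence zero; minimality finishes it). What your route buys is that the finitely generated hypothesis is never actually used---you prove the stronger statement that $H^1(\TT^{n+d})\hookrightarrow H^1(\Omega)$ for any canonical projection scheme with minimal $\Gamma$-action on $\Xi$. What the paper's route buys is that it sits inside a larger machine already set up to compute all of $H^*(\Omega)$, so the injection falls out as a byproduct rather than requiring a separate argument.
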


\medskip\noindent{\bf Sketch Proof.}
This follows from the work of  \cite{FHK1}, (see also \cite{GHK2} which covers the more general case of `almost canonical' projections), which we briefly review.

The work of \cite{FHK1} sets up a method to
compute the \v{C}ech cohomology $H^*(\Omega)$ via certain exact
sequences in group homology. In particular, there is a short exact
sequence of $\Gamma$ modules
$$0\lra C^n\lra C^{n-1}\lra C^{n-2}_0\lra 0$$
which induces a long exact sequence
\begin{equation}\label{gpcohLES}
\cdots\lra H_{s+1}(\Gamma;C^{n-1})\lra H_{s+1}(\Gamma;C^{n-2}_0)\lra
H_{s}(\Gamma;C^n)\lra \cdots\,.\end{equation} The groups and map
$H_{s+1}(\Gamma;C^{n-2}_0)\lra H_{s}(\Gamma;C^n)$ in this sequence
may be identified with the homomorphism $H^{d-s}(\TT^{n+d})\lra
H^{d-s}(\Omega)$ induced by the almost everywhere 1--1 surjection
$\Omega \to\TT^{n+d}$ to the $n+d$ torus.

The results of \cite{FHK1} (ch.\,IV, Thm.\,6.7,  ch.\,V, Thm.\,2.4)
tell us that if $H^*(\Omega)$ is to be finitely generated then $n$
must divide $d$. In this situation, it can be deduced that
$H_{s}(\Gamma;C^{n-1})$ is non-zero only for $s\leqslant{d\over
n}(n-1)$. The exact sequence \ref{gpcohLES} and these observations
taken together yield for cohomology in dimension 1 the sequence
$$0\lra H^1(\TT^{n+d})\lra H^1(\Omega)\lra\cdots$$
from which the lemma follows.\qed

\medskip\noindent{\bf Proof of Corollary \ref{codim=comdim}.}
By Proposition \ref{cohcalc} $H^1(A;\Q)$ is a vector space of dimension at most $d+1$. If $\Omega$ is the tiling space of a canonical projection scheme of internal dimension $n$, then $H^1(\Omega;\Q)=\Q^{n+d}$. Clearly $n=1$ is the only possibility for $\Omega$ being shape equivalent to $A$.\qed

\end{document}